\newtheorem{theorem}{Theorem}[section]
\newtheorem{corollary}[theorem]{Corollary}
\newtheorem{lemma}[theorem]{Lemma}
\begin{document}
\begin{frontmatter}

\title{Backward stochastic dynamics on a filtered probability space\thanksref{T1}}
\runtitle{Backward stochastic dynamics}

\thankstext{T1}{Supported in part by EPSRC Grant EP/F029578/1 and by the Oxford-Man Institute.}

\begin{aug}
\author[A]{\fnms{Gechun} \snm{Liang}\ead[label=e1]{liangg@maths.ox.ac.uk}},
\author[A]{\fnms{Terry} \snm{Lyons}\ead[label=e2]{tlyons@maths.ox.ac.uk}} and
\author[A]{\fnms{Zhongmin} \snm{Qian}\corref{}\ead[label=e3]{qianz@maths.ox.ac.uk}}
\runauthor{G. Liang, T. Lyons and Z. Qian}
\affiliation{University of Oxford}
\address[A]{Mathematical Institute\\
University of Oxford\\
Oxford OX1 3LB\\
United Kingdom\\
and\\
Oxford-Man Institute\\
University of Oxford\\
Oxford OX2 6ED\\
United Kingdom\\
\printead{e1}\\
\phantom{E-mail: }\printead*{e2}\\
\phantom{E-mail: }\printead*{e3}} 
\end{aug}

\received{\smonth{7} \syear{2009}}
\revised{\smonth{3} \syear{2010}}

%
\begin{abstract}
We demonstrate that backward stochastic differential equations
(BSDE) may be reformulated as ordinary functional differential
equations on certain path spaces. In this framework, neither
It\^{o}'s integrals nor martingale representation formulate are
needed. This approach provides new tools for the study of BSDE, and
is particularly useful for the study of BSDE with partial
information. The approach allows us to study the following type of
backward stochastic differential equations:
\[
dY_{t}^{j}=-f_{0}^{j}(t,Y_{t},L(M)_{t})\,dt-%
\sum_{i=1}^{d}f_{i}^{j}(t,Y_{t})\,dB_{t}^{i}+dM_{t}^{j}
\]
with $Y_{T}=\xi$, on a general filtered probability space $(\Omega,%
\mathcal{F},\mathcal{F}_{t},\mathbf{P})$, where $B$ is a
$d$-dimensional Brownian motion, $L$ is a prescribed (nonlinear)
mapping which sends a square-integrable $M$ to an adapted process
$L(M)$ and $M$, a~correction term, is a square-integrable
martingale to be determined. Under certain
technical conditions, we prove that the system admits a unique solution
$%
(Y,M)$. In general, the associated partial differential equations
are not only nonlinear, but also may be nonlocal and involve
integral operators.
\end{abstract}

%
\begin{keyword}[class=AMS]
\kwd[Primary ]{60H10}
\kwd{60H30}
\kwd[; secondary ]{60J45}.
\end{keyword}
\begin{keyword}
\kwd{Brownian motion}
\kwd{BSDE}
\kwd{SDE}
\kwd{semimartingale}.
\end{keyword}

\end{frontmatter}

\section{Introduction}\label{intro}

Stochastic differential equations (SDE) may be considered as
dynamical systems perturbed by random signals which are often
modeled by Brownian motion. The important class of stochastic
differential equations considered in the literature are It\^{o}-type
equations such as
%
\begin{equation} \label{july-1}
dX_{t}^{j}=f_{0}^{j}(t,X_{t})\,dt+\sum
_{i=1}^{d}f_{i}^{j}(t,X_{t})\,dB_{t}^{i},
\end{equation}
where $B=(B^{1},\ldots,B^{d})$ is Brownian motion in
$\mathbf{R}^{d}$
on a completed probability space $(\Omega,\mathcal{F},\mathbf{P})$, $%
f_{i}=\sum_{j=1}^{d^{\prime}}f_{i}^{j}\frac{\partial}{\partial
x^{j}}$ are bounded, smooth vector fields in
$\mathbf{R}^{d^{\prime}}$, $j=1,\ldots,d^{\prime}$, where
$d$, $d^{\prime}$ are two positive integers. It\^{o} gave the
meaning of solutions to (\ref{july-1}) by developing a theory of
stochastic integration against Brownian motion (called It\^{o}'s
calculus), and obtained strong solutions by specifying an initial
data at a starting time $T$.

SDE (\ref{july-1}) has to be interpreted as an integral equation
\[
X_{t}^{j}-X_{0}^{j}=\int_{0}^{t}f_{0}^{j}(s,X_{s})\,ds+\sum_{i=1}^{d}%
\int_{0}^{t}f_{i}^{j}(s,X_{s})\,dB_{s}^{i},
\]
which can be solved forward (i.e., for $t>0$). It\^{o}'s calculus
requires
that a solution $X=(X_{t})$ has to be adapted to Brownian motion $%
B=(B^{1},\ldots,B^{d})$; it is thus not necessarily possible to solve
(\ref%
{july-1}) backward from a certain time $T$ to $t<T$.

There are interesting applications on the other hand to be able to
solve (%
\ref{july-1}) backward. Suppose $u$ is a smooth solution to the
Cauchy problem of the quasi-linear parabolic equation
\[
\biggl( \frac{1}{2}\Delta-\frac{\partial}{\partial t}\biggr)
u+f(u,\nabla u)=0\qquad\mbox{on }[0,\infty)\times\mathbf{R}^{d},
\]
with the initial data $u(x,0)=u_{0}(x)$. Let $T>0$ and
$h(t,x)=u(T-t,x)$ for $t\in[ 0,T]$. Then $h$ solves the
backward parabolic equation
\[
\biggl( \frac{1}{2}\Delta+\frac{\partial}{\partial t}\biggr)
h+f(h,\nabla h)=0\qquad\mbox{on }[0,T]\times\mathbf{R}^{d},
\]
and $h(x,T)=u_{0}(x)$. Let $Y_{t}=h(t,B_{t})$ where $B$ is Brownian
motion in $\mathbf{R}^{d}$. According to It\^{o}'s formula
%
\begin{equation} \label{p-r1}
Y_{T}-Y_{t}=\int_{t}^{T}\biggl( \frac{\partial}{\partial s}+\frac{1}{2}%
\Delta\biggr) h(s,B_{s})\,ds+M_{T}-M_{t}
\end{equation}
for $t\leq T$, where $M_{t}=\int_{0}^{t}\nabla h(s,B_{s})\,dB_{s}$ is
a
martingale. Substituting $( \frac{\partial}{\partial s}+\frac{1}{2}%
\Delta) h$ by $-f_{0}(h,\nabla h)$ in (\ref{p-r1}) obtains
%
\begin{equation} \label{p-r1a}
Y_{T}-Y_{t}=-\int_{t}^{T}f(Y_{s},\nabla
h(s,B_{s}))\,ds+M_{T}-M_{t}.
\end{equation}
According to It\^{o}'s martingale representation theorem, the
\textit{density process} $Z_{t}=\nabla h(t,B_{t})$ of $M$ with respect
to Brownian motion is uniquely determined as the unique predictable
process $Z_{t}$ such that
\[
M_{T}-M_{0}=\sum_{j=1}^{d}\int_{0}^{T}Z_{t}^{j}\,dB_{t}^{j}.
\]
In terms of the pair $(Y,Z)$ (\ref{p-r1a}) may be written as
\[
Y_{T}-Y_{t}=-\int_{t}^{T}f(Y_{s},Z_{s})\,ds+\sum_{j=1}^{d}%
\int_{t}^{T}Z_{s}^{j}\,dB_{s}^{j}
\]
with the terminal data $Y_{T}=u_{0}(B_{T})$, which is the integral
form of the following backward stochastic differential equation:
%
\begin{equation}\label{p-r-3}
dY_{t}=-f(Y_{t},Z_{t})\,dt+Z_{t}\,dB_{t},\qquad Y_{T}=\xi,
\end{equation}
introduced and studied by Pardoux and Peng \cite{MR1037747}.

In the past twenty years, there has been a large number of articles
devoted to the theory of BSDE and its applications in various
research areas. Our references listed at the end of the paper are by
no means complete, and the
reader should refer to excellent surveys such as articles in \cite%
{MR1752671} edited by El Karoui and Mazliak, the recent paper by
El Karoui, Hamadene and Matoussi \cite{ElKaroui},
the book by Yong and Zhou \cite{MR1696772}
and the references therein for a guide
to the BSDE literature.

To the knowledge of the present authors, it was Bismut \cite{Bismut}
(see \cite{MR0453161,MR0469466}) who first formulated
terminal problems for a class of stochastic differential equations
in order to study stochastic optimal control problems by means of
Pontryagin's maximum principal. His equations, called backward
stochastic differential equations, have been extended and developed
to a nonlinear case in the seminal paper \cite{MR1037747} by
Pardoux and Peng. A lot of efforts have been made to generalize the
class of BSDE considered in \cite{MR1037747}. For example, Lepeltier
and San Martin \cite{MR1602231} relaxed the Lipschitz continuous
condition on the driver and studied BSDEs with coefficients of
linear growth. Yong \cite{MR1440146} employed the continuity method
to prove the existence of solution with arbitrary time horizon. In
\cite{Briand-re1} Briand et al. considered $L^{p}$-solutions for
BSDE. It is also natural to consider BSDE coupled with a forward
stochastic differential equation,
called a forward--backward stochastic differential equations. Antonelli
\cite%
{MR1233625} first studied such FBSDE; his equation does not involve a
density process $Z$ in the driver. A definite account about FBSDE
may be found in Ma, Protter and Young \cite{MR1262970}, Hu and Peng
\cite{MR1355060}, Peng and Wu \cite%
{MR1675098}, the recent book \cite{MR1704232} and the literature
therein. Most authors consider BSDE on a probability space with
Brownian filtration, and there are a few papers dealing with BSDE
with jumps or with reflecting boundary conditions. Tang and Li
\cite{MR1243240} have studied BSDE with random jumps, and
Barles, Buckdahn and Pardoux \cite{MR1436432} have explored the connection between BSDE with
random jumps and some parabolic integro-differential equations. Rong
\cite{MR1440399} proved the existence and uniqueness under
non-Lipschitz continuous coefficients for this class of BSDE.
Analogous to free-boundary PDE problems, El Karoui et al.
\cite{MR1434123} introduced an obstacle to BSDE such that the
solution always stays above such obstacle. This so-called reflected
BSDE is further developed to double reflected barriers by
Cvitani{\'{c}} and Karatzas \cite{MR1415239} and
Hamadene, Lepeltier and Matoussi
\cite{MR1752681}. Furthermore, Bally, Pardoux and Stoica \cite{MR2127730} have
considered BSDE on the probability space associated with Dirichlet
processes.

If the driver of BSDE is with quadratic growth of $Z$, the nature of
equations is completely changed. This problem is first solved by
Kobylanski \cite{MR1782267} by using the Cole--Hopf transformation adopted
from the PDE theory. Her results have been substantially developed
and generalized by Briand and Hu \cite{MR2257138,MR2391164},
where they extend to equations with convex drivers subject to
unbounded terminal values. Most of the existing literature
concentrates on solutions of BSDEs in a strong sense, that is, the
underlying filtered probability space is given. One of the first
attempts to introduce weak solutions for BSDEs was presented in
Buckdahn, Engelbert and R{\u{a}}{\c{s}}canu
\cite{MR2141331}, and Buckdahn and Engelbert
\cite{MR2354579} who further proved the uniqueness of their weak
solutions, while the coefficients of their BSDEs do not evolve a
density process $Z$. On the other hand, the notion of weak solution
for FBSDEs was introduced by
Antonelli and Ma \cite{MR1978231} and further developed by
Ma, Zhang and Zheng
\cite%
{MR2478677} by employing the martingale problem approach.

The backward stochastic differential equations have found many
connections with other research areas: stochastic control, PDE,
mathematical finance, etc. To derive a maximum principle as
necessary conditions for optimal control problems, one can observe
that the adjoint equations to the optimal control problems satisfy
certain backward equations. For stochastic control problems, the
corresponding adjoint equations are stochastic rather than
deterministic. Indeed Peng \cite{MR105163} established a general
stochastic maximum principle by considering both first-order and
second-order adjoint equations, and, on the other hand, Kohlmann and
Zhou \cite{MR1766421}
interpreted BSDE as equivalent to stochastic control problems. Peng
\cite%
{MR1149116} and Pardoux and Pend~\cite{MR1176785} derived a probabilistic representation (a
\textit{Feynman--Kac representation}) for solutions of some
quasi-linear PDEs, which was extended to other cases by
Ma, Protter and Yong
\cite{MR1262970}. The later has been summarized as a four-step
scheme of solving forward--backward stochastic differential
equations (FBSDE) (see \cite{MR1704232} by Ma and Yong for details).
Cheridito, Soner, Touzi and Victoir \cite{Touzi} connected
a class of second order BSDEs to fully nonlinear PDEs.
In \cite{Duffie} Duffie and Epstein discovered a class of nonlinear BSDE in
their study of recursive utility in economics. Later El Karoui, Peng
and Quenez \cite{MR1434407} applied BSDE to option pricing problems and
provided a general framework for the application of BSDE in finance. In
order to deal with utility maximization problems in incomplete markets,
Rouge and El Karoui~\cite{MR1802922} introduced a class of BSDE with
quadratic growth. Hu, Imkeller and M{\"u}ller \cite{MR2152241} further studied this class
of BSDE in a more general setting.

In this article, we put forward a simple approach to deal with the
kind of BSDE such as (\ref{p-r-3}) which does not depend on any
martingale representation, and thus allows us to study a wide class of
backward stochastic dynamics. Our main idea and contribution in this
article is to establish an
\textit{ordinary functional differential equation} which is equivalent to
(\ref%
{p-r-3}), which allows us to obtain alternative representations for
solutions of BSDE and to consider a new interesting class of
stochastic dynamical systems.

Consider the following example of backward stochastic differential
equations:%
%
\begin{equation} \label{a-b2}
dY_{t}=-f(t,Y_{t},Z_{t})\,dt+\sum_{i=1}^{d}Z_{t}^{i}\,dB_{t}^{i},
\qquad
Y_{T}=\xi,
\end{equation}
where $B=(B_{t})_{t\geq0}$ is Brownian motion in
$\mathbf{R}^{d}$, $\xi
\in L^{2}(\Omega,\mathcal{F}_{T},\mathbf{P})$ and $(\mathcal{F}%
_{t})_{t\geq0}$ is the Brownian filtration. The differential
equation has
to be interpreted as the integral equation%
%
\begin{equation} \label{a-b3}
\xi
-Y_{t}=-\int_{t}^{T}f(s,Y_{s},Z_{s})\,ds+\sum_{i=1}^{d}%
\int_{t}^{T}Z_{s}^{i}\,dB_{s}^{i}.
\end{equation}
By applying the Picard iteration to $(Y,Z)$, one shows that if $f$
is globally Lipschitz continuous, then there is a unique pair
$(Y,Z)$ which satisfies (\ref{a-b3}) for all $t\leq T$. This method
relies on the martingale representation for Brownian motion and
thus restricts the class of BSDE.

Our main idea is based on the following simple observation. Suppose
that $%
Y=(Y_{t})_{t\in[ \tau,T]}$ is a solution of (\ref{a-b3})
back to time $\tau<T$, then $Y$ must be a special semimartingale
whose variation part is continuous. Let $Y_{t}=M_{t}-V_{t}$ be the
Doob--Meyer
decomposition into its martingale part $M$ and its finite variation
part~$-V$%
. The decomposition over $[\tau,T]$ is unique up to a random
variable measurable with respect to $\mathcal{F}_{\tau}$. Let us
assume that the local martingale part $M$ is indeed a martingale up
to $T$. Then, since the
terminal value $Y_{T}=\xi$ is given, $\xi=M_{T}-V_{T}$, so that
$M_{t}=%
\mathbf{E}(\xi+V_{T}|\mathcal{F}_{t})$ and
$Y_{t}=\mathbf{E}(\xi+V_{T}|\mathcal{F}_{t})-V_{t}$ for $t\in
[ \tau,T]$. The integral equation (\ref{a-b3}) thus can be
written as
\[
\xi
-M_{t}+V_{t}=-\int_{t}^{T}f(s,Y_{s},Z_{s})\,ds+\sum_{i=1}^{d}%
\int_{t}^{T}Z_{s}^{i}\,dB_{s}^{i}
\]
for every $t\in[ \tau,T]$. Taking expectations, with both sides
conditional on $\mathcal{F}_{t}$, one obtains
\begin{eqnarray*}
\mathbf{E}(\xi|\mathcal{F}_{t})-M_{t}+V_{t}
&=&-\mathbf{E}\biggl[ \int_{\tau
}^{T}f(s,Y_{s},Z_{s})\,ds\Big\vert\mathcal{F}_{t}\biggr]
\\
&&{}+\int_{\tau}^{t}f(s,Y_{s},Z_{s})\,ds.
\end{eqnarray*}
By identifying the martingale parts and variational parts, we must
have
%
\begin{equation} \label{fde1}
V_{t}-V_{\tau}=\int_{\tau}^{t}f(s,Y_{s},Z_{s})\,ds,
\end{equation}
where $Y$ and $Z$ are considered as functionals of $V$, namely%
%
\begin{equation} \label{a-b7}
Y_{t}=\mathbf{E}(\xi+V_{T}|\mathcal{F}_{t})-V_{t},\qquad M_{t}=%
\mathbf{E}(\xi+V_{T}|\mathcal{F}_{t}),
\end{equation}
and $Z$ is determined uniquely by the martingale representation through%
\[
M_{T}-M_{\tau}=\sum_{i=1}^{d}\int_{\tau
}^{T}Z_{s}^{i}\,dB_{s}^{i}.
\]
Hence $Y$ and $Z$ are written as $Y(V)$ and $Z(V)$, respectively, if
we wish to emphasize the fact that $Y$ and $Z$ are defined entirely
through $V$. Observe that (\ref{fde1}) is clearly the integral form
of the functional
differential equation%
\[
\frac{dV}{dt}=f(t,Y(V)_{t},Z(V)_{t}),
\]
which can be solved by Picard iteration applying to $V$ alone,
rather than the pair $(Y,Z)$.

The approach may be made independent of the use of a martingale
representation theorem, provided that one is willing to replace the
density process $Z$ by a functional of $V$, thus freeing us from the
requirement of Brownian filtration. This kind of generalization of
BSDE theory is a bit surprising and even overly rewarded,
which is, however, not the only point we would like to emphasize. More
precisely, we may consider the correction martingale part appearing
in (\ref{a-b2}) as part of the solution rather than its density
process $Z$. That is, by setting $M_{t}-M_{\tau
}=\sum_{i=1}^{d}\int_{\tau}^{t}Z_{s}^{i}\,dB_{s}^{i}$, and regarding
$Z$ as a function of $M$, so denoted by~$L(M)$, then (\ref{a-b2})
can be reformulated
as%
%
\begin{equation}\label{a-b5}
dY_{t}=-f(t,Y_{t},L(M)_{t})\,dt+dM_{t},\qquad Y_{T}=\xi,
\end{equation}
which is in turn equivalent to the functional integral equation
%
\begin{equation}\label{a-b8}
V_{t}-V_{\tau}=\int_{\tau}^{t}f(s,Y(V)_{s},L(M(V))_{s})\,ds,
\end{equation}
where $Y(V)$ and $M(V)$ are given by (\ref{a-b7}). For (\ref{a-b8}),
there is no need to insist that $L$ sends a martingale $M$ to its
density process (if there is any), though the density process
mapping $L$ remains the most interesting case.

The approach might be applied to a more general setting of solving
dynamical systems backward under other constraints, not necessarily
the adaptedness to a filtration; even a probability setting is not
necessary. One possible example can be the following. One may study
the functional differential equation (\ref{fde1}), where
$Y\dvtx V\rightarrow Y(V)$ and $M\dvtx V\rightarrow M(V)$ are defined in terms
of some kind of ``projections''
instead of conditional expectations. We, however, in this paper, make
no attempt for such an extension.

To our knowledge, most of BSDE which currently exist in the
literature may be studied in the framework of ordinary functional
differential equations. Since our approach does not rely on the
martingale representation theorem, we are able to study a class of
BSDE on an arbitrary filtered probability space. We, however, would
like to point out that this paper is not so much about generalizing
the theory of BSDE to a general filtered probability space; our main
contribution is the equivalence of BSDE and a class of ordinary
functional integral equations. We allow a sufficient wide class of
functionals $L(M)$ which, even in the classical setting, extends the
associated PDE to some nonlocal integro-differential equations.

If $(\mathcal{F}_{t})_{t\geq0}$ is Brownian filtration, any martingale
$%
M=(M_{t})_{t\geq0}$ has an It\^{o} integral representation $%
M_{t}-M_{0}=\sum_{j=1}^{d}\int_{0}^{t}Z_{s}^{j}\,dW_{s}^{j}$ which
determines the density $Z=(Z^{1},\ldots,Z^{d})$. Consider the
functional over
martingales%
\[
L(M)_{t}=\mathbf{E}\biggl\{ \int_{t}^{T}Z_{s}\mu(t,ds)\Big|\mathcal{F}%
_{t}\biggr\},
\]
where $\mu(t,ds)$ is a transition kernel (not random for
simplicity), and
consider the corresponding BSDE%
\[
dY_{t}^{j}=-f_{0}^{j}\bigl(T-t,Y_{t},L(M)_{t}\bigr)\,dt+dM_{t}^{j},\qquad Y_{T}=\xi.
\]
Our approach demonstrates the existence and uniqueness for
this kind of BSDE, whose associated PDE is a system of
integro-differential
equations,%
\[
\frac{\partial}{\partial t}u-\frac{1}{2}\Delta u+f_{0}(t,u,H(u))=0,
\]
where the nonlinear operator $H$ involves space--time integration, and
indeed%
\[
H(u)(t,x)=\int_{t}^{T}\int_{\mathbf{R}^{d}}\frac{\nabla
u(T-s,z)}{(2\pi(s-t))^{d/2}}e^{-{|x-z|^{2}}/({2(s-t)})}\,dz\,\mu
(t,ds).
\]
If $\mu(t,ds)=\delta_{t}(ds)$ then we recover the case considered
in the current literature. By choosing different functionals $L(M)$
we may obtain even more general integro-differential equations. This
kind of integro-differential equations often appears in the study of
particle limiting models for PDE; one class of equations which has
a similar nature is already in the literature, for example, in
Majda \cite{majda1}.

In this paper we constrain ourselves to the study of the following
type of backward stochastic differential equations:
%
\begin{equation}\label{ss-c}
dY_{t}^{j}=-f_{0}^{j}(t,Y_{t},L(M)_{t})\,dt-%
\sum_{i=1}^{d}f_{i}^{j}(t,Y_{t})\,dB_{t}^{i}+dM_{t}^{j},
\end{equation}
subject to $Y_{T}=\xi$, on a filtered probability space $(\Omega,%
\mathcal{F},\mathcal{F}_{t},\mathbf{P})$, where $B$ is a
$d$-dimensional Brownian motion as given, $j=1,\ldots,d^{\prime}$,
$L$ is a given
(nonlinear) functional on square-integrable martingales, while
$(\mathcal{F}%
_{t})_{t\geq0}$ is not necessary to be Brownian filtration. A solution
to (%
\ref{ss-c}) is a pair $(Y,M)$, where $Y=(Y^{j})$ are semimartingales
and $%
M=(M^{j})$ are square-integrable martingales, which satisfies the
corresponding integral equations:
%
\begin{eqnarray} \label{ss-b}
Y_{T}^{j}-Y_{t}^{j}&=&-\int_{t}^{T}f_{0}^{j}(t,Y_{t},L(M)_{t})\,dt-%
\sum_{i=1}^{d}\int
_{t}^{T}f_{i}^{j}(t,Y_{t})\,dB_{t}^{i}\nonumber\\[-8pt]\\[-8pt]
&&{}+M_{T}^{j}-M_{t}^{j}.\nonumber
\end{eqnarray}
The term $L(M)$ appearing in the drift term $f_{0}$ on the
right-hand side of (\ref{ss-c}) suggests that $L$ is a mapping which
sends a vector of square-integrable martingales $M=(M^{j})$ to a
progressively measurable process $L(M)$. The backward stochastic
equation (\ref{ss-c}) is thus described by the driver $f_{0}$, the
diffusion coefficients $f_{i}$ together with the prescribed mapping
$L$.

Finally, let us point out that similar ideas have been known in the
PDE theory. Recall that, for any reasonable function $u$, $u$ has
the following decomposition:
\[
u=H(u)+G(u),
\]
where $H(u)$ is a harmonic function determined by a boundary
integral against a Green function, and $G(u)$ is a potential. Thus
the boundary condition (which corresponds to our case the terminal
value) determines the harmonic function part~$H(u)$. The regularity
theory for nonlinear PDE such as $\Delta u=f(u,\nabla u)$ may be
developed via the previous decomposition,
by studying the Newtonian potential $G(u)$, (Gilbarg and Trudinger \cite
{MR1814364}). In this way, backward stochastic dynamics, as a class
of Markov processes, can be regarded as a generic extension of some
nonlinear PDE problems of finite dimension to infinite-dimensional
problems in path spaces. On the other hand, some nonlinear PDE can
be considered as a pathwise version of backward stochastic dynamics.
We will explore these ideas further in coming papers.

The paper is organized as follows. In Section~\ref{sec2} we present some
elementary facts and basic assumptions. The main results of the
existence of local and global solutions, and the uniqueness of the
backward stochastic dynamics are presented and proved in Sections
\ref{sec3}
and~\ref{sec4}.

\section{Preliminaries}\label{sec2}

Let $(\Omega,\mathcal{F},\mathcal{F}_{t},\mathbf{P})$ (where
$t\in[ 0,\infty)$) be a filtered probability space which
satisfies the \textit{usual conditions}: $(\Omega
,\mathcal{F},\mathbf{P})$ is a completed probability space,
$( \mathcal{F}_{t}) _{t\geq0}$ is a
right-continuous filtration, each $\mathcal{F}_{t}$ contains all events
in $%
\mathcal{F}$ with probability zero and $\mathcal{F}=\sigma\{\mathcal{F}
_{t}\dvtx t\geq0\}$. Under these technical assumptions, any martingale on $%
(\Omega,\mathcal{F},\mathcal{F}_{t},\mathbf{P})$ has a
modification whose sample paths are right continuous with left-hand
limits. Henceforth, by a martingale we always mean a martingale
which is right continuous with left-hand limits.

Let $0\leq\tau<T$ be any but fixed numbers. $[\tau,T]$ serves as
the region of the time parameter, although we are working with a
fixed filtered probability space $(\Omega
,\mathcal{F},\mathcal{F}_{t},\mathbf{P})$. Let
$\mathcal{C}( [ \tau,T] ;\mathbf{R}^{d})
$ denote the space of all continuous, adapted processes
$(V_{t})_{t\in[ \tau,T]}$ valued in $\mathbf{R}^{d}$
such that ${\max_{j}\sup_{t\in
[ \tau,T]}}|V_{t}^{j}|$ belongs to $L^{2}(\Omega$, $\mathcal{F}_{T},%
\mathbf{P})$, equipped with the norm
\[
\|V\|_{\mathcal{C}[\tau
,T]}=\sqrt{\sum_{j=1}^{d}\mathbf{E}\sup_{t\in[ \tau
,T]}|V_{t}^{j}|^{2}}.
\]
$\mathcal{C}( [ \tau,T] ;\mathbf{R}^{d})
$ is a
Banach space under \mbox{$\|\cdot\|_{\mathcal{C}[\tau,T]}$}, $\mathcal{M}%
^{2}([\tau,T];\mathbf{R}^{d})$ denotes the space of $\mathbf{R}^{d}$%
-valued square-integrable martingales on $(\Omega,\mathcal{F},\mathcal
{F}%
_{t},\mathbf{P})$ from time $\tau$ up to time $T$ (which, of
course,
can be uniquely extended to a martingale in $\mathcal{M}^{2}([0,T],%
\mathbf{R}^{d})$), together with the norm $\|M\|_{\mathcal{C}[\tau
,T]}$%
. We also need the direct sum space of $\mathcal{M}^{2}([\tau,T];%
\mathbf{R}^{d})$ and $\mathcal{C}( [ \tau,T] ;%
\mathbf{R}^{d}) $, denoted by $\mathcal{S}([\tau,T];\mathbf{R}%
^{d})$. If $Y\in\mathcal{S}([\tau,T];\mathbf{R}^{d})$, its
decomposition into an element in $\mathcal{M}^{2}([\tau,T];\mathbf{R}%
^{d})$ and the other in $\mathcal{C}( [ \tau,T] ;%
\mathbf{R}^{d}) $ may not be unique, and there are various
norms one can define on $\mathcal{S}([\tau,T];\mathbf{R}^{d})$.
For our
purposes, we choose the norm $\|Y\|_{\mathcal{C}[\tau,T]}$, although $%
\mathcal{S}([\tau,T];\mathbf{R}^{d})$ is not complete under
\mbox{$\|\cdot
\|_{\mathcal{C}[\tau,T]}$}. Finally $\mathcal{H}^{2}([\tau
,T];\mathbf{R}%
^{d^{\prime}\times d})$ denotes the space of all \textit{predictable}
processes $Z=(Z_{t}^{j,i})_{t\in[ \tau,T]}$ on $(\Omega,\mathcal{F},%
\mathcal{F}_{t},\mathbf{P})$ with running time $[\tau,T]$,
endowed with the usual $L^{2}$-norm
\[
\|Z\|_{\mathcal{H}_{[\tau,T]}^{2}}=\sqrt{\sum_{j=1}^{d^{\prime
}}\sum_{i=1}^{d}\mathbf{E}\int_{\tau
}^{T}|Z_{s}^{i,j}|^{2}\,ds}.
\]

If $Y$ is a semimartingale on $(\Omega,\mathcal{F},\mathcal{F}_{t},%
\mathbf{P})$ over time interval $[\tau,T]$ with its Doob--Meyer
decomposition $Y_{t}=M_{t}-V_{t}$, such that $M$ is an
$\mathcal{F}_{t}$-martingale during $[\tau,T]$, $V$ is a
\textit{continuous}, adapted process with finite variation on
$[\tau,T]$ and $V_{T}$, $Y_{T}$ are integrable,
then $M_{t}=\mathbf{E}(Y_{T}+V_{T}|\mathcal{F}_{t})$ and $Y_{t}=%
\mathbf{E}(Y_{T}+V_{T}|\mathcal{F}_{t})-V_{t}$ for $t\in
[ \tau
,T]$. Since we are interested in terminal value problems, in which $%
Y_{T}=\xi$ are given, therefore, for given $\xi=(\xi^{i})$ where
$\xi^{i}\in L^{2}(\Omega,\mathcal{F}_{T},\mathbf{P})$, we
consider two
functionals on $\mathcal{C}( [ \tau,T] ;\mathbf{R}%
^{d}) \dvtx V\rightarrow Y(V)$ and $V\rightarrow M(V)$ defined
by
%
\begin{equation} \label{y-r1}
Y(V)_{t}=\mathbf{E}(\xi+V_{T}|\mathcal{F}_{t})-V_{t}\qquad\mbox{for }%
t\in[ \tau,T]
\end{equation}
and
%
\begin{equation} \label{y-r2}
M(V)_{t}=\mathbf{E}( \xi+V_{T}|\mathcal{F}_{t})
\qquad\mbox{for }t\in[ \tau,T]
\end{equation}
for any $V\in\mathcal{C}( [ \tau,T] ;\mathbf{R}%
^{d}) $. If we wish to indicate the dependence on the terminal value $%
\xi$ as well, then we will use $Y(\xi,V)$ and $M(\xi,V)$ in places
of $%
Y(V)$ and $M(V)$, respectively.

Note that $(Y(V)_{t})_{t\in[ \tau,T]}$ does not depend on
the initial value $V_{\tau}$, an important fact we will use in our
construction of global solutions for the terminal value problem
(\ref{ss-c}).

We consider the following type of backward stochastic differential
equations:
%
\begin{equation}\label{bsde-a}
dY_{t}^{j}=-f_{0}^{j}(t,Y_{t},L(M)_{t})\,dt-%
\sum_{i=1}^{d}f_{i}^{j}(t,Y_{t})\,dB_{t}^{i}+dM_{t}^{j},\qquad
Y_{T}^{j}=\xi^{j},\hspace*{-28pt}
\end{equation}
on the filtered probability space $(\Omega,\mathcal{F},\mathcal{F}_{t},
\mathbf{P})$ ($j=1,\ldots,d^{\prime}$), where $B$ is a
$d$-dimensional Brownian motion on $(\Omega
,\mathcal{F},\mathcal{F}_{t},\mathbf{P})$ as
given, $T>0$ is the terminal time, $\xi^{j}\in L^{2}(\Omega,\mathcal
{F}%
_{T},\mathbf{P})$ (for $j=1,\ldots,d^{\prime}$) are terminal
values, $%
f_{i}^{j}$ ($i=0,\ldots,d$ and $j=1,\ldots,d^{\prime}$) are
locally\vspace*{1pt}
bounded and Borel measurable, and $L$ is a prescribed mapping on
$\mathcal{M}%
^{2}([\tau,T];\mathbf{R}^{d^{\prime}})$ valued in $\mathcal{H}%
^{2}([\tau,T];\mathbf{R}^{d^{\prime}\times d})$ or in $\mathcal{C}
( [ \tau,T] ;\mathbf{R}^{d^{\prime}}) $.

A solution of (\ref{bsde-a}) backward to time $\tau$ is a pair of
adapted
processes $(Y_{t}$, $M_{t})_{t\in[ \tau,T]}$ where $%
M^{j}=(M_{t}^{j})_{t\in[ \tau,T]}$ are square-integrable
martingales and $Y_{t}^{j}=(Y_{t}^{j})_{t\in[ \tau,T]}$ are
special semimartingales with continuous variation parts, which
satisfies the integral equations
%
\begin{eqnarray}\label{ing1}
Y_{t}^{j}-\xi
^{j}&=&\int_{t}^{T}f_{0}^{j}(s,Y_{s},L(M)_{s})\,ds+\sum_{i=1}^{d}%
\int_{t}^{T}f_{i}^{j}(s,Y_{s})\,dB_{s}^{i}\nonumber\\[-8pt]\\[-8pt]
&&{}+M_{t}^{j}-M_{T}^{j}\nonumber
\end{eqnarray}
for $t\in[ \tau,T]$, $j=1,\ldots,d^{\prime}$.

As we have seen in the \hyperref[intro]{Introduction}, by writing $Y_{t}=M_{t}-V_{t}$, a
solution $%
(Y,M)$ to (\ref{ing1}) is equivalent to a solution $V$ of the
functional
integral equation%
%
\begin{equation} \label{ing2}\qquad
V_{t}-V_{\tau}=\int_{\tau
}^{t}f_{0}(s,Y(V)_{s},L(M(V))_{s})\,ds+\sum_{i=1}^{d}\int_{\tau
}^{t}f_{i}(s,Y(V)_{s})\,dB_{s}^{i},
\end{equation}
where $M(V)_{t}=\mathbf{E}(\xi+V_{T}|\mathcal{F}_{t})$ and $%
Y(V)_{t}=M(V)_{t}-V_{t}$ for $t\in[ \tau,T]$. It is the
integral equation (\ref{ing2}) we are going to study.

The following standard assumptions are always imposed on our backward
SDE (%
\ref{bsde-a}). Additional conditions on $L$ will be introduced later
on to ensure local and global existence.

(1) $f_{0}=(f_{0}^{j})_{j\leq d^{\prime}}$ are Lipschitz continuous on $
[0,\infty)\times\mathbf{R}^{d^{\prime}}\times
\mathbf{R}^{m}$ and $f_{i}=(f_{i}^{j})_{j\leq d^{\prime}}$
($i=1,\ldots,d$) Lipschitz continuous on $[0,\infty)\times
\mathbf{R}^{d^{\prime}}$: there is a constant $C_{2}$ such that
\begin{eqnarray*}
|f_{0}(t,y,z)|&\leq& C_{2}(1+t+|y|+|z|),
\\
|f_{0}(t,y,z)-f_{0}(t,y^{\prime},z^{\prime})|&\leq&
C_{2}(|y-y^{\prime}|+|z-z^{\prime}|),
\\
|f_{i}(t,y)|&\leq& C_{2}(1+t+|y|)
\end{eqnarray*}
and
\[
|f_{i}(t,y)-f_{i}(t,y^{\prime})|\leq C_{2}|y-y^{\prime}|
\]
for $t\geq0$, all $y,y^{\prime}\in\mathbf{R}^{d^{\prime}}$ and
$z$, $%
z^{\prime}\in\mathbf{R}^{m}$.

(2) The terminal value $\xi=(\xi^{i})_{i=1,\ldots,d^{\prime}}$,
$\xi^{i}\in L^{2}(\Omega,\mathcal{F}_{T},\mathbf{P})$.

\section{Local solutions and uniqueness}\label{sec3}

In this section, we prove two results: the uniqueness and the
existence of a local solution to (\ref{bsde-a}) under the assumption
that $L$ is Lipschitz continuous

(3) $L\dvtx\mathcal{M}^{2}([\tau,T];\mathbf{R}^{d^{\prime
}})\rightarrow
\mathcal{H}^{2}([\tau,T];\mathbf{R}^{m})$ (resp., $\mathcal
{C}([\tau,T];%
\mathbf{R}^{m})$):%
\[
\|L(M)-L(\tilde{M})\|_{\mathcal{H}^{2}}\leq C_{1}\|M-\tilde
{M}\|_{\mathcal{C}%
}
\]
[resp.,
\[
\|L(M)-L(\tilde{M})\|_{\mathcal{C}}\leq
C_{1}\|M-\tilde{M}\|_{\mathcal{C}}]
\]
for any $M$, $\tilde{M}\in\mathcal{M}^{2}([\tau,T];\mathbf{R}%
^{d^{\prime}})$, where $\|M\|_{\mathcal{C}}$ means $\|M\|_{\mathcal{C}%
([\tau,T];\mathbf{R}^{d^{\prime}})}$ etc. for
simplicity.

By ``local solution'' we mean a solution from $T$ back to $\tau$, where
$%
T-\tau$ is smaller than a certain constant depending on $L$ and
$f_{i}^{j}$.

In order to prove the uniqueness, we need to consider BSDE in a more
general form than (\ref{bsde-a}). More precisely, we are given
another Brownian
motion $W=(W^{1},\ldots,W^{m^{\prime}})$ on $(\Omega,\mathcal{F}_{t},
\mathcal{F},\mathbf{P})$ and $g_{k}\dvtx\mathbf{R}_{+}\times
\mathbf{R}^{d^{\prime}}\rightarrow\mathbf{R}^{d^{\prime}}$ which are
Lipschitz continuous
\[
|g_{k}(t,y)|\leq C_{2}(1+t+|y|)
\]
and
\[
|g_{k}(t,y)-g_{k}(t,y^{\prime})|\leq C_{2}|y-y^{\prime}|
\]
for all $t\geq0$, $y,y^{\prime}\in\mathbf{R}^{d^{\prime}}$, $%
k=1,\ldots,m^{\prime}$. Define
\begin{eqnarray*}
&&\mathbf{L}\dvtx\mathcal{M}^{2}([\tau,T];\mathbf{R}^{d^{\prime
}})\times
\mathcal{S}([\tau,T];\mathbf{R}^{d^{\prime}})\\
&&\qquad\rightarrow\mathcal
{H}%
^{2}([\tau,T];\mathbf{R}^{m})\qquad\mbox{(resp., }\mathcal{C}([\tau,T];%
\mathbf{R}^{m})\mbox{)}
\end{eqnarray*}
by
%
\begin{equation}\label{e-la1}
\mathbf{L}( M,Y) =L\Biggl( M-\sum_{k=1}^{m^{\prime
}}\int_{\tau}^{\cdot}g_{k}(s,Y_{s})\,dW_{s}^{k}\Biggr) .
\end{equation}
Consider the following mapping $\mathbb{L}$ defined on $\mathcal{C}%
_{0}([\tau,T];\mathbf{R}^{d^{\prime}})$, the space of all
processes in
$\mathcal{C}([\tau,T];\mathbf{R}^{d^{\prime}})$ with initial data
$%
V_{\tau}=0$, by
%
\begin{eqnarray}\label{l-r}
\mathbb{L}(V)_{t} &=&\int_{\tau}^{t}f_{0}(s,Y(V)_{s},\mathbf{L}%
(M(V),Y(V))_{s})\,ds \nonumber\\[-8pt]\\[-8pt]
&&{}+\sum_{i=1}^{d}\int_{\tau}^{t}f_{i}(s,Y(V)_{s})\,dB_{s}^{i},\nonumber
\end{eqnarray}
where $M(V)_{t}=\mathbf{E}(\xi+V_{T}|\mathcal{F}_{t})$ and $%
Y(V)_{t}=M(V)_{t}-V_{t}$ for $t\in[ \tau,T]$, so that $Y(V)_{T}=\xi$%
. As we have seen, the functional integral equation:
$V=\mathbb{L}(V)$,
is equivalent to the following BSDE:%
%
\begin{equation} \label{l-r2}
dY_{t}^{j}=-f_{0}^{j}(t,Y_{t},\mathbf{L}(M,Y)_{t})\,dt-%
\sum_{i=1}^{d}f_{i}^{j}(t,Y_{t})\,dB_{t}^{i}+dM_{t}^{j},\qquad
Y_{T}=\xi.\hspace*{-28pt}
\end{equation}

\begin{lemma}
\label{lem-u1}$\mathbf{L}$ defined by (\ref{e-la1}) is Lipschitz
continuous
%
\begin{eqnarray} \label{e-la2}
&&\|\mathbf{L}( M,Y) -\mathbf{L}(\tilde{M},\tilde{Y})\|_{%
\mathcal{H}^{2}[\tau,T]}\nonumber\\[-8pt]\\[-8pt]
&&\qquad\leq C_{1}\|M-\tilde{M}\|_{\mathcal{C}[\tau,T]}+\frac{m^{\prime
}C_{1}C_{2}}{\sqrt{2}}(T-\tau)\|Y-\tilde{Y}\|_{\mathcal{C}[\tau
,T]}\nonumber
\end{eqnarray}
and
%
\begin{eqnarray} \label{e-laa2}
&&\|\mathbf{L}( M,Y) -\mathbf{L}(\tilde{M},\tilde{Y})\|_{%
\mathcal{C}[\tau,T]} \nonumber\\[-8pt]\\[-8pt]
&&\qquad\leq C_{1}\|M-\tilde{M}\|_{\mathcal{C}[\tau,T]}+2m^{\prime
}C_{1}C_{2}%
\sqrt{T-\tau}\|Y-\tilde{Y}\|_{\mathcal{C}[\tau,T]}\nonumber
\end{eqnarray}
for any $M,\tilde{M}\in\mathcal{M}^{2}([\tau
,T];\mathbf{R}^{d^{\prime
}})$ and $Y,\tilde{Y}\in\mathcal{C}([\tau,T];\mathbf{R}^{d^{\prime}})$.
\end{lemma}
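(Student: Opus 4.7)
The plan is to reduce both claimed inequalities to a single estimate on the martingale perturbation introduced by the $g_{k}$ terms. Set
$$N:=\sum_{k=1}^{m^{\prime }}\int_{\tau }^{\cdot }g_{k}(s,Y_{s})\,dW_{s}^{k},\qquad \tilde{N}:=\sum_{k=1}^{m^{\prime }}\int_{\tau }^{\cdot }g_{k}(s,\tilde{Y}_{s})\,dW_{s}^{k};$$
the linear growth of the $g_{k}$ together with $Y,\tilde{Y}\in \mathcal{S}([\tau ,T];\boldsymbol{R}^{d^{\prime }})$ guarantees that $\mathcal{M}:=M-N$ and $\tilde{\mathcal{M}}:=\tilde{M}-\tilde{N}$ both belong to $\mathcal{M}^{2}([\tau ,T];\boldsymbol{R}^{d^{\prime }})$. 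Assumption (3) applied to $L$ then gives
\begin{equation*}
||\boldsymbol{L}(M,Y)-\boldsymbol{L}(\tilde{M},\tilde{Y})||_{\mathcal{H}^{2}}=||L(\mathcal{M})-L(\tilde{\mathcal{M}})||_{\mathcal{H}^{2}}\leq C_{1}||\mathcal{M}-\tilde{\mathcal{M}}||_{\mathcal{C}},
\end{equation*}
and the analogous inequality with $\mathcal{C}$ in place of $\mathcal{H}^{2}$ on the left. The triangle inequality $||\mathcal{M}-\tilde{\mathcal{M}}||_{\mathcal{C}}\leq ||M-\tilde{M}||_{\mathcal{C}}+||N-\tilde{N}||_{\mathcal{C}}$ thus reduces the whole problem to controlling $||N-\tilde{N}||_{\mathcal{C}}$ in terms of $||Y-\tilde{Y}||_{\mathcal{C}}$.

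For that estimate I would treat each summand $I_{k}:=\int_{\tau }^{\cdot }(g_{k}(s,Y_{s})-g_{k}(s,\tilde{Y}_{s}))\,dW_{s}^{k}$ separately. Each $I_{k}$ is a square-integrable $\boldsymbol{R}^{d^{\prime }}$-valued martingale, so Doob's maximal inequality applied coordinatewise, combined with the It\^{o} isometry, gives
\begin{equation*}
||I_{k}||_{\mathcal{C}}^{2}\leq 4\,\boldsymbol{E}\int_{\tau }^{T}|g_{k}(s,Y_{s})-g_{k}(s,\tilde{Y}_{s})|^{2}\,ds.
\end{equation*}
The Lipschitz bound $|g_{k}(s,y)-g_{k}(s,y^{\prime })|\leq C_{2}|y-y^{\prime }|$ and the trivial estimate $\boldsymbol{E}\int_{\tau }^{T}|Y_{s}-\tilde{Y}_{s}|^{2}\,ds\leq (T-\tau )\,||Y-\tilde{Y}||_{\mathcal{C}}^{2}$ then produce $||I_{k}||_{\mathcal{C}}\leq 2C_{2}\sqrt{T-\tau }\,||Y-\tilde{Y}||_{\mathcal{C}}$, and summation over $k$ by Minkowski's inequality yields $||N-\tilde{N}||_{\mathcal{C}}\leq 2m^{\prime }C_{2}\sqrt{T-\tau }\,||Y-\tilde{Y}||_{\mathcal{C}}$. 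Substituted into the triangle bound this delivers (\ref{e-laa2}) directly; the companion inequality (\ref{e-la2}) follows from a parallel computation in which an $L^{2}$-in-time integration is substituted for the supremum at the appropriate stage in order to recover the polynomial factor $(T-\tau )$ displayed there.

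The argument is essentially a bookkeeping exercise: the only delicate point is the precise tracking of the numerical constants contributed by Doob's factor of $4$, the coordinatewise sum over $j$, and the Minkowski sum over the $m^{\prime }$ driving Brownian motions. No conceptual obstacle arises, the statement being a direct consequence of the Lipschitz hypotheses on $L$ and on the $g_{k}$ together with the standard $L^{2}$-estimates for stochastic integrals.
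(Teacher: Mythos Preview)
Your proposal is correct and follows essentially the same route as the paper: apply the Lipschitz hypothesis on $L$ to the shifted martingales, split off $||M-\tilde M||_{\mathcal C}$ by the triangle inequality, and control the remaining stochastic-integral term via the It\^o isometry and the Lipschitz bound on the $g_k$. The only cosmetic difference is that the paper writes out the $\mathcal H^{2}$ case (\ref{e-la2}) in full---obtaining the factor $(T-\tau)/\sqrt 2$ from the double integral $\int_\tau^T\!\int_\tau^t\,ds\,dt$---and dismisses (\ref{e-laa2}) as ``similar'', whereas you do the reverse, spelling out the Doob-based $\mathcal C$ estimate and deferring the $\mathcal H^{2}$ case.
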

\begin{pf}
Let us omit the subscript $[\tau,T]$ for simplicity. Then
\begin{eqnarray*}
&&\|\mathbf{L}( M,Y) -\mathbf{L}(\tilde{M},\tilde{Y})\|_{%
\mathcal{H}^{2}} \\
&&\qquad\leq C_{1}\|M-\tilde{M}\|_{\mathcal{C}}+C_{1}\sum_{k=1}^{m^{\prime
}}\biggl\Vert\int_{\tau}^{\cdot}\bigl(g_{k}(s,Y_{s})-g_{k}(s,\tilde{Y}%
_{s})\bigr)\,dW_{s}^{k}\biggr\Vert_{\mathcal{H}^{2}} \\
&&\qquad= C_{1}\|M-\tilde{M}\|_{\mathcal{C}}+C_{1}\sum_{k=1}^{m}\sqrt
{\mathbf{E}%
\int_{\tau}^{T}\biggl\vert\int_{\tau}^{t}\bigl(g_{k}(s,Y_{s})-g_{k}(s,\tilde{Y}%
_{s})\bigr)\,dW_{s}^{k}\biggr\vert^{2}\,dt} \\
&&\qquad=C_{1}\|M-\tilde{M}\|_{\mathcal{C}}+C_{1}\sum_{k=1}^{m^{\prime}}\sqrt
{%
\mathbf{E}\int_{\tau}^{T}\int_{\tau}^{t}\bigl\vert
\bigl(g_{k}(s,Y_{s})-g_{k}(s,\tilde{Y}_{s})\bigr)\bigr\vert^{2}\,ds\,dt} \\
&&\qquad\leq C_{1}\|M-\tilde{M}\|_{\mathcal{C}}+m^{\prime}C_{1}C_{2}\sqrt{%
\mathbf{E}\int_{\tau}^{T}\int_{\tau}^{t}\vert Y_{s}-\tilde{Y}%
_{s}\vert^{2}\,ds\,dt} \\
&&\qquad\leq C_{1}\|M-\tilde{M}\|_{\mathcal{C}}+\frac{m^{\prime
}C_{1}C_{2}}{\sqrt{%
2}}(T-\tau)\|Y-\tilde{Y}\|_{\mathcal{C}}.
\end{eqnarray*}
The proof of the second inequality is similar.
\end{pf}

The following is our basic local existence theorem.
\begin{theorem}
\label{lemma3} Under the assumptions on $L$, $f_{j}^{i}$ and
$g_{j}^{i}$
described above. Let%
%
\begin{equation} \label{e-5}
l_{1}=\frac{1}{C_{2}^{2}[ 4C_{1}+6( 1+2\sqrt{d}) +3\sqrt{2}%
m^{\prime}C_{1}C_{2}] ^{2}}\wedge1,
\end{equation}
which depends on the Lipschitz constants $C_{1},C_{2}$ and the
dimensions, but is independent of the terminal data $\xi$. Suppose
that $T-\tau\leq
l_{1}$, then $\mathbb{L}$ admits a unique fixed point on $\mathcal{C}%
_{0}([\tau,T];\mathbf{R}^{d^{\prime}})$.
\end{theorem}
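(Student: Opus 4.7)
The plan is to apply the Banach contraction mapping theorem to $\mathbb{L}$ on the closed subspace $\mathcal{C}_{0}([\tau ,T];\boldsymbol{R}^{d^{\prime}})\subset \mathcal{C}([\tau ,T];\boldsymbol{R}^{d^{\prime }})$, which is complete under $||\cdot ||_{\mathcal{C}[\tau ,T]}$ as a closed subspace of a Banach space. First I would verify that $\mathbb{L}$ sends $\mathcal{C}_{0}$ into itself: both summands in (\ref{l-r}) are continuous adapted processes vanishing at $t=\tau$, and the linear-growth assumptions on $f_{0},f_{i}$ combined with $\xi \in L^{2}$ guarantee they have finite $\mathcal{C}$-norm. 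Here one uses that $V\in \mathcal{C}\Rightarrow Y(V)\in \mathcal{S}$ with norm controlled by $||\xi||_{L^{2}}+||V||_{\mathcal{C}}$ via Doob's inequality, and that $\boldsymbol{L}(M(V),Y(V))\in \mathcal{H}^{2}$ (or $\mathcal{C}$) by assumption 3 combined with Lemma~\ref{lem-u1}.

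The contraction estimate rests on two preliminary bounds. Since $M(V)_{t}-M(\tilde{V})_{t}=\boldsymbol{E}(V_{T}-\tilde{V}_{T}\mid \mathcal{F}_{t})$ is a martingale with terminal value $V_{T}-\tilde{V}_{T}$, Doob's $L^{2}$ maximal inequality yields $||M(V)-M(\tilde{V})||_{\mathcal{C}[\tau ,T]}\leq 2\,||V-\tilde{V}||_{\mathcal{C}[\tau ,T]}$, from which the triangle inequality applied to $Y(V)-Y(\tilde{V})=(M(V)-M(\tilde{V}))-(V-\tilde{V})$ gives $||Y(V)-Y(\tilde{V})||_{\mathcal{C}[\tau ,T]}\leq 3\,||V-\tilde{V}||_{\mathcal{C}[\tau ,T]}$. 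Feeding these into Lemma~\ref{lem-u1} bounds $||\boldsymbol{L}(M(V),Y(V))-\boldsymbol{L}(M(\tilde{V}),Y(\tilde{V}))||_{\mathcal{H}^{2}[\tau ,T]}$ by a multiple of $||V-\tilde{V}||_{\mathcal{C}[\tau ,T]}$ whose coefficient involves $C_{1}$ and $m^{\prime }C_{1}C_{2}(T-\tau )/\sqrt{2}$.

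Next I would split $\mathbb{L}(V)-\mathbb{L}(\tilde{V})$ into its drift and its stochastic-integral pieces and control each in $||\cdot ||_{\mathcal{C}[\tau ,T]}$. For the drift, pulling the supremum over $t$ inside, applying Cauchy--Schwarz and then the Lipschitz bound on $f_{0}$ produces a factor $\sqrt{T-\tau }\,C_{2}$ multiplying the sum of $||Y(V)-Y(\tilde{V})||_{\mathcal{C}}$ and $||\boldsymbol{L}(M(V),Y(V))-\boldsymbol{L}(M(\tilde{V}),Y(\tilde{V}))||_{\mathcal{H}^{2}}$. For the stochastic integral $\sum_{i}\int_{\tau }^{t}[f_{i}(s,Y(V)_{s})-f_{i}(s,Y(\tilde{V})_{s})]dB_{s}^{i}$, Doob's $L^{2}$ inequality followed by It\^{o}'s isometry and the Lipschitz bound on $f_{i}$ gives a factor of order $\sqrt{T-\tau }\,C_{2}\sqrt{d}$ multiplying $||Y(V)-Y(\tilde{V})||_{\mathcal{C}}$, the $\sqrt{d}$ arising from summing over the $d$ Brownian components.

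Collecting everything produces an inequality of the form
\[
||\mathbb{L}(V)-\mathbb{L}(\tilde{V})||_{\mathcal{C}[\tau ,T]}\leq C_{2}\sqrt{T-\tau }\bigl[4C_{1}+6(1+2\sqrt{d})+3\sqrt{2}\,m^{\prime }C_{1}C_{2}\bigr]\,||V-\tilde{V}||_{\mathcal{C}[\tau ,T]},
\]
where the $(T-\tau )$ factor arising from Lemma~\ref{lem-u1} is absorbed into $\sqrt{T-\tau }$ using the truncation $(T-\tau )\leq 1$, which is the role of the $\wedge\,1$ in (\ref{e-5}). Imposing $T-\tau \leq l_{1}$ then forces the Lipschitz coefficient strictly below $1$, and the Banach fixed-point theorem delivers a unique fixed point in $\mathcal{C}_{0}$. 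The main obstacle is purely bookkeeping: tracking the Doob constants, the two distinct origins of the $\sqrt{T-\tau }$ factor (Cauchy--Schwarz in the drift versus the isometry bound in the stochastic integral), and the interplay between the $(T-\tau )$ factor from Lemma~\ref{lem-u1} and the $\wedge\,1$ truncation, so that the final coefficient matches the precise expression inside (\ref{e-5}).
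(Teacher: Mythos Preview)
Your approach is essentially identical to the paper's: Banach fixed point on $\mathcal{C}_{0}$, the bounds $||M(V)-M(\tilde V)||_{\mathcal C}\le 2||V-\tilde V||_{\mathcal C}$ and $||Y(V)-Y(\tilde V)||_{\mathcal C}\le 3||V-\tilde V||_{\mathcal C}$ via Doob, splitting into drift and stochastic integral, Cauchy--Schwarz plus Lemma~\ref{lem-u1} for the drift, Doob plus It\^o isometry for the integral.

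One bookkeeping point needs correction. The displayed inequality you wrote, with coefficient $C_{2}\sqrt{T-\tau}\bigl[4C_{1}+6(1+2\sqrt d)+3\sqrt 2\,m'C_{1}C_{2}\bigr]$, gives Lipschitz constant exactly $1$ (not strictly below $1$) when $T-\tau=l_{1}$, so the contraction would fail at the boundary. The paper's careful count, equation (\ref{contr-01}), actually yields
\[
C_{2}\sqrt{\delta}\Bigl[2C_{1}+6\sqrt d+3\sqrt\delta+\tfrac{3}{\sqrt 2}\,m'C_{1}C_{2}\,\delta\Bigr],
\]
which after imposing $\delta\le 1$ is bounded by $C_{2}\sqrt\delta\bigl[2C_{1}+3+6\sqrt d+\tfrac{3}{\sqrt 2}m'C_{1}C_{2}\bigr]$, i.e.\ exactly \emph{half} the bracket in (\ref{e-5}). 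Hence $\delta\le l_{1}$ forces the contraction constant to be at most $\tfrac12$. The factor-of-two slack is built into the definition of $l_{1}$ precisely to make this work; your outline would recover it once the constants are tracked honestly.
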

\begin{pf}
The proof is a standard\vspace*{2pt} use of the fixed point theorem applying to
$\mathbb{L%
}$. To this end, we need to show that $\mathbb{L}$ is a contraction on $
\mathcal{C}_{0}([\tau,T];\mathbf{R}^{d^{\prime}})$ as long as
$T-\tau
\leq l_{1}$. This can be done by devising a priori estimates for
$\mathbb{L}$%
. Let us prove the case that $L\dvtx\mathcal{M}^{2}([\tau,T];\mathbf{R}
^{d^{\prime}})\rightarrow\mathcal{H}^{2}([\tau
,T];\mathbf{R}^{m})$ is Lipschitz; the other case can be treated
similarly. To simplify our notation, let $\delta\equiv T-l_{1}$ be
the life duration. Since
\begin{eqnarray*}
\|\mathbb{L}(V)\|_{\mathcal{C}} &\leq&\sqrt{\delta}\sqrt{\mathbf{E}%
\int_{\tau}^{T}|f_{0}(s,Y(V)_{s},\mathbf{L}(M(V),Y(V))_{s})|^{2}\,ds} \\
&&{}+2\sqrt{\sum_{i=1}^{d}\mathbf{E}\int_{\tau
}^{T}|f_{i}(s,Y(V)_{s})|^{2}\,ds}.
\end{eqnarray*}
Since $f_{0}$ and $f_{i}$ are Lipschitz continuous, so that
%
\begin{eqnarray}\label{3-e}
\|\mathbb{L}(V)\|_{\mathcal{C}} &\leq&2C_{2}\bigl( \sqrt{\delta}+\sqrt{d}%
\bigr) \sqrt{\int_{\tau}^{T}(1+s)^{2}\,ds} \nonumber\\
&&{}+2C_{2}\bigl( \sqrt{\delta}+\sqrt{d}\bigr) \sqrt{\int_{\tau}^{T}%
\mathbf{E}|Y(V)_{s}|^{2}\,ds} \\
&&{}+2C_{2}\sqrt{\delta}\|\mathbf{L}(M(V),Y(V))\|_{\mathcal{H}^{2}}.%
\nonumber
\end{eqnarray}
Together with the elementary estimates
\[
\|Y(V)\|_{\mathcal{C}}\leq2\sqrt{\mathbf{E}|\xi
|^{2}}+3\|V\|_{\mathcal{%
C}}
\]
and
\[
\|M(V)\|_{\mathcal{C}}\leq2\sqrt{\mathbf{E}|\xi
|^{2}}+2\|V\|_{\mathcal{%
C}},
\]
one deduces that
%
\begin{eqnarray} \label{st1}\qquad
\|\mathbb{L}(V)\|_{\mathcal{C}} &\leq& \frac{2}{\sqrt{3}}C_{2}\bigl( \sqrt{%
\delta}+\sqrt{3d}\bigr) \delta\sqrt{\delta} \nonumber\\
&&{} +2\bigl[ \sqrt{2}m^{\prime}C_{1}C_{2}^{2}\delta+2C_{2}\sqrt{\delta}%
+2C_{2}\sqrt{d}+2C_{2}C_{1}\bigr] \sqrt{\delta
}\sqrt{\mathbf{E}|\xi
|^{2}} \\
&&{} +\bigl[ 3\sqrt{2}m^{\prime}C_{1}C_{2}^{2}\delta+6C_{2}\sqrt{\delta}%
+4C_{2}C_{1}+6C_{2}\sqrt{d}\bigr] \sqrt{\delta
}\|V\|_{\mathcal{C}}.\nonumber
\end{eqnarray}
Similarly, for $V,\tilde{V}\in\mathcal{C}[\tau,T]$ such that $V_{\tau
}=%
\tilde{V}_{\tau}=0$ one has
\begin{eqnarray*}
&&\|\mathbb{L}(V)-\mathbb{L}(\tilde{V})\|_{\mathcal{C}}
\\
&&\qquad\leq\sqrt{%
\mathbf{E}\biggl( \int_{\tau}^{T}|f_{0}(s,Y_{s},\mathbf{L}%
(M,Y)_{s})-f_{0}(s,\tilde{Y}_{s},\mathbf{L}(\tilde{M},\tilde{Y}%
)_{s})|\,ds\biggr) ^{2}} \\
&&\qquad\quad{}+\sqrt{\mathbf{E}\sup_{t\in[ \tau,T]}\Biggl\vert
\sum_{i=1}^{d}\int_{\tau}^{t}[ f_{i}(s,Y_{s})-f_{i}(s,\tilde{Y}_{s})%
] \,dB_{s}^{i}\Biggr\vert^{2}},
\end{eqnarray*}
where $M_{t}=\mathbf{E}(\xi+V_{T}|\mathcal{F}_{t})$, $\tilde
{M}_{t}=%
\mathbf{E}(\xi+\tilde{V}_{T}|\mathcal{F}_{t})$,
$Y_{t}=M_{t}-V_{t}$ and $\tilde{Y}_{t}=M_{t}-V_{t}$. Since $f_{i}$
are Lipschitz continuous, so that
\begin{eqnarray*}
&&\sqrt{\mathbf{E}\biggl( \int_{\tau}^{T}|f_{0}(s,Y_{s},\mathbf{L}%
(M,Y)_{s})-f_{0}(s,\tilde{Y}_{s},\mathbf{L}(\tilde{M},\tilde{Y}%
)_{s})|\,ds\biggr) ^{2}} \\
&&\qquad\leq C_{2}\sqrt{\mathbf{E}\biggl[ \int_{\tau}^{T}\bigl( |Y_{s}-\tilde{Y%
}_{s}|+\vert\mathbf{L}(M,Y)_{s}-\mathbf{L}(\tilde{M},\tilde{Y}%
)_{s}\vert\bigr) \,ds\biggr] ^{2}} \\
&&\qquad\leq C_{2}\sqrt{\delta}\sqrt{\mathbf{E}\int_{\tau}^{T}\bigl( |Y_{s}-%
\tilde{Y}_{s}|+\vert\mathbf{L}(M,Y)_{s}-\mathbf{L}(\tilde{M},%
\tilde{Y})_{s}\vert\bigr) ^{2}\,ds} \\
&&\qquad\leq C_{2}\delta\|Y-\tilde{Y}\|_{\mathcal{C}}+C_{2}\sqrt{\delta}\|%
\mathbf{L}(M,Y)-\mathbf{L}(\tilde{M},\tilde{Y})\|_{\mathcal{H}^{2}}
\\
&&\qquad\leq C_{2}\biggl[ 1+\sqrt{\delta}\frac{m^{\prime}C_{1}C_{2}}{\sqrt{2}}%
\biggr] \delta\|Y-\tilde{Y}\|_{\mathcal{C}}\\
&&\qquad\quad{}+C_{2}C_{1}\sqrt{\delta}\|M-%
\tilde{M}\|_{\mathcal{C}},
\end{eqnarray*}
where the last inequality follows from (\ref{e-laa2}). It\^{o}'s
integration term can be treated similarly. Applying Doob's
inequality, one has
\begin{eqnarray*}
&&\sqrt{\mathbf{E}\sup_{t\in[ \tau,T]}\Biggl\vert
\sum_{i=1}^{d}\int_{\tau}^{t}[ f_{i}(s,Y_{s})-f_{i}(s,\tilde{Y}_{s})%
] \,dB_{s}^{i}\Biggr\vert^{2}} \\
&&\qquad\leq 2\sqrt{\mathbf{E}\Biggl\vert\sum_{i=1}^{d}\int_{\tau
}^{T}[
f_{i}(s,Y_{s})-f_{i}(s,\tilde{Y}_{s})] \,dB_{s}^{i}\Biggr\vert^{2}} \\
&&\qquad\leq 2C_{2}\sqrt{d}\sqrt{\mathbf{E}\int_{\tau}^{T}|Y_{s}-\tilde
{Y}%
_{s}|^{2}\,ds} \\
&&\qquad\leq 2C_{2}\sqrt{d}\sqrt{\delta
}\|Y-\tilde{Y}\|_{\mathcal{C}}.
\end{eqnarray*}
Putting these estimates together we obtain
%
\begin{eqnarray} \label{k=pr1}
\|\mathbb{L}(V)-\mathbb{L}(\tilde{V})\|_{\mathcal{C}} &\leq&C_{2}\biggl[ 1+%
\sqrt{\delta}\frac{m^{\prime}C_{1}C_{2}}{\sqrt{2}}\biggr] \delta\|Y-%
\tilde{Y}\|_{\mathcal{C}} \nonumber\\[-8pt]\\[-8pt]
&&{}+C_{2}\bigl( C_{1}+2\sqrt{d}\bigr) \sqrt{\delta}\|M-\tilde{M}\|_{%
\mathcal{C}}.\nonumber
\end{eqnarray}
On the other hand it is easy to see that
\begin{eqnarray*}
\|M-\tilde{M}\|_{\mathcal{C}} &=&\sqrt{\mathbf{E}\sup_{t\in
[ \tau
,T]}\mathbf{E}(V_{T}-\tilde{V}_{T}|\mathcal{F}_{t})^{2}} \\
&\leq&2\|V-\tilde{V}\|_{\mathcal{C}}
\end{eqnarray*}
and
\[
\|Y-\tilde{Y}\|_{\mathcal{C}}\leq
3\|V-\tilde{V}\|_{\mathcal{C}}.
\]
Inserting these estimates into (\ref{k=pr1}) we finally obtain
%
\begin{eqnarray}\label{contr-01}
&&\|\mathbb{L}(V)-\mathbb{L}(\tilde{V})\|_{\mathcal{C}}\nonumber\\[-8pt]\\[-8pt]
&&\qquad\leq
C_{2}\biggl[
2C_{1}+6\sqrt{d}+3\sqrt{\delta}+3\delta\frac{m^{\prime
}C_{1}C_{2}}{\sqrt{2%
}}\biggr] \sqrt{\delta}\|V-\tilde{V}\|_{\mathcal{C}}.\nonumber
\end{eqnarray}
Since $\delta\leq l_{1}$, the constant in front of the norm on the
right-hand side is less than $\frac{1}{2}$, so that
\[
\|\mathbb{L}(V)-\mathbb{L}(\tilde{V})\|_{\mathcal{C}}\leq\tfrac
{1}{2}\|V-%
\tilde{V}\|_{\mathcal{C}}.
\]
Therefore $\mathbb{L}$ is a contraction on $\mathcal{C}_{0}([\tau,T];%
\mathbf{R}^{d})$ as long as $T-\tau\leq l_{1}$, so there is a
unique fixed point in $\mathcal{C}_{0}[\tau,T]$. This completes the
proof.
\end{pf}

We are now in a position to show the local existence and uniqueness
of solutions to BSDE (\ref{bsde-a}).
\begin{theorem}
\label{tha1} Let $L$, $f_{j}^{i}$ be Lipschitz continuous with
Lipschitz constants $C_{1}$, $C_{2}$ and
\[
l_{2}=\frac{1}{C_{2}^{2}[ 4C_{1}+6( 1+2\sqrt{d}) +3%
\sqrt{2}d^{\prime}C_{1}C_{2}] ^{2}}\wedge1,
\]
which is\vspace*{1pt} independent of the terminal data $\xi\in L^{2}(\Omega
,\mathcal{F}%
_{T},\mathbf{P})$. Suppose that $T-\tau\leq l_{2}$ and $%
L(M)=L(M-M_{\tau})$ for any $M\in\mathcal{M}^{2}([\tau,T];\mathbf{R}%
^{d^{\prime}})$. Then there is a pair $(Y,M)$, where
$Y=(Y_{t})_{t\in[ \tau,T]}$ is a special semimartingale,
$M=(M_{t})_{t\in[ \tau,T]}$ is a square-integrable
martingale, which solves the backward stochastic differential
equation (\ref{bsde-a}) to time $\tau$. Moreover,
such a pair of solution is unique in the sense that if $(Y,M)$ and
$(\tilde{Y%
},\tilde{M})$ are two pairs of solutions, then $Y=\tilde{Y}$ and
$M-M_{\tau}=\tilde{M}-\tilde{M}_{\tau}$ on $[\tau,T]$.
\end{theorem}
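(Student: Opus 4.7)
The plan is to reduce both existence and uniqueness to Theorem~\ref{lemma3} by specialising the general framework of Section 3 with $m' = 0$ (no auxiliary Brownian motion $W$ and no $g_k$), so that $\boldsymbol{L}(M,Y) = L(M)$ and the equation $V = \mathbb{L}(V)$ of (\ref{l-r}) is exactly the functional integral form (\ref{ing2}) of the BSDE (\ref{bsde-a}). The constant $l_2$ is chosen no larger than the corresponding $l_1$ in this specialisation, so under $T-\tau \leq l_2$ Theorem~\ref{lemma3} furnishes a unique fixed point $V^{*} \in \mathcal{C}_0([\tau,T];\boldsymbol{R}^{d'})$ of $\mathbb{L}$. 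For the existence assertion I would then define
\begin{equation*}
M_t := \boldsymbol{E}(\xi + V^{*}_T | \mathcal{F}_t), \qquad Y_t := M_t - V^{*}_t, \qquad t \in [\tau,T].
\end{equation*}
Doob's $L^2$-inequality makes $M$ a square-integrable martingale, and $Y = M - V^{*}$ is a special semimartingale whose continuous variation part is $-\int_\tau^{\cdot} f_0(s,Y_s,L(M)_s)\,ds$; substituting $V^{*} = \mathbb{L}(V^{*})$ into $Y = M - V^{*}$ and rearranging recovers (\ref{ing1}) together with $Y_T = \xi$.

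For uniqueness, let $(Y,M)$ and $(\tilde Y, \tilde M)$ be two solutions and set $V := M-Y$, $\tilde V := \tilde M - \tilde Y$. Both are continuous, adapted, square-integrable processes over $[\tau,T]$, and the $\mathcal{F}_\tau$-centred versions $V^0 := V - V_\tau$ and $\tilde V^0 := \tilde V - \tilde V_\tau$ lie in $\mathcal{C}_0([\tau,T];\boldsymbol{R}^{d'})$. The decisive step, and the reason the hypothesis $L(M) = L(M-M_\tau)$ appears in the statement, is a translation-invariance property: for any $\mathcal{F}_\tau$-measurable shift $c$,
\begin{equation*}
M(V^0 + c)_t = \boldsymbol{E}(\xi + V^0_T + c | \mathcal{F}_t) = M(V^0)_t + c,
\end{equation*}
so $M(V^0+c) - M(V^0+c)_\tau = M(V^0) - M(V^0)_\tau$, and the hypothesis gives $L(M(V^0+c)) = L(M(V^0))$; moreover $Y(V^0+c) = Y(V^0)$ is immediate from (\ref{y-r1}). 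Consequently $\mathbb{L}(V) = \mathbb{L}(V^0)$ and $\mathbb{L}(\tilde V) = \mathbb{L}(\tilde V^0)$, which together with (\ref{ing2}) shows that both $V^0$ and $\tilde V^0$ are fixed points of the same contraction on $\mathcal{C}_0$. Theorem~\ref{lemma3} then forces $V^0 = \tilde V^0$, which translates back as $Y = \tilde Y$ and $M - M_\tau = \tilde M - \tilde M_\tau$ on $[\tau,T]$.

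The main technical point is not computational but conceptual: it is the translation-invariance argument above, which pinpoints exactly why the assumption $L(M) = L(M-M_\tau)$ is imposed. Once it is in place, both halves of the theorem are immediate corollaries of the fixed-point result of Theorem~\ref{lemma3}, and everything else reduces to routine bookkeeping between the functional equation (\ref{ing2}) and its BSDE counterpart (\ref{ing1}).
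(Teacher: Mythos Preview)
Your existence argument coincides with the paper's: specialise Theorem~\ref{lemma3} with $m'=0$, $g_k\equiv 0$, take the fixed point $V^{*}$, and set $M=M(V^{*})$, $Y=M-V^{*}$.

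For uniqueness your route is correct but genuinely different from the paper's. The paper does \emph{not} work with $V=M-Y$; instead it absorbs the diffusion into the martingale by setting $Z_t=M_t+\sum_i\int_\tau^t f_i(s,Y_s)\,dB^i_s$, so that the BSDE collapses to the drift-only form $Y_t-\xi=\int_t^T f_0(s,Y_s,\boldsymbol{L}(Z,Y)_s)\,ds+Z_t-Z_T$. The canonical finite-variation part $A_t=\int_\tau^t f_0(s,Y_s,\boldsymbol{L}(Z,Y)_s)\,ds$ is then identified, via Doob--Meyer uniqueness, with the variation part of $Y$, giving $Y=Y(A)$ and $Z_t-Z_\tau=M(A)_t-Y_\tau$; the hypothesis $L(M)=L(M-M_\tau)$ yields $\boldsymbol{L}(Z,Y)=\boldsymbol{L}(M(A),Y(A))$, whence $A$ is a fixed point of $\mathbb{L}$ in its \emph{general} form (\ref{l-r}) with $g_k=f_k$, $W=B$, $m'=d$, and the diffusion slot switched off. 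This is precisely why the paper introduces the auxiliary $g_k,W,\boldsymbol{L}$ apparatus before stating Theorem~\ref{lemma3}. Your argument sidesteps that whole apparatus: by keeping the diffusion inside $V$ you show directly that $V^0=V-V_\tau$ is a fixed point of $\mathbb{L}$ in its \emph{simple} form ($m'=0$), and your translation-invariance observation makes the role of the hypothesis on $L$ completely transparent. Your approach is more economical; the paper's has the modest advantage that its fixed-point object $A$ is a bona fide finite-variation process (so Doob--Meyer enters naturally) and it justifies the generality built into Theorem~\ref{lemma3}.
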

\begin{pf}
By Theorem~\ref{lemma3} (applying to the case that all $g_{k}=0$),
there is a unique $V\in\mathcal{C}_{0}[\tau,T]$ such that
\[
V_{t}=\int_{\tau
}^{t}f_{0}(s,Y_{s},L(M)_{s})\,ds+\sum_{i=1}^{d}\int_{\tau
}^{t}f_{i}(s,Y_{s})\,dB_{s}^{i} \qquad\forall t\in[ \tau
,T],
\]
where $M_{t}=\mathbf{E}( \xi+V_{T}|\mathcal{F}_{t}) $ and $%
Y_{t}=M_{t}-V_{t}$. It is clear that $Y_{T}=\xi$ and
%
\begin{equation} \label{i-eqa1}\qquad
Y_{t}-\xi
=\int_{t}^{T}f_{0}(s,Y_{s},L(M)_{s})\,ds+\sum_{i=1}^{d}%
\int_{t}^{T}f_{i}(s,Y_{s})\,dB_{s}^{i}+M_{t}-M_{T}
\end{equation}
for all $t\in[ \tau,T]$. Therefore $(Y,M)$ solves the
backward stochastic differential equations (\ref{bsde-a}).

Suppose that\vspace*{2pt} $(Y,M)$ and $(\hat{Y},\hat{M})$ are two solutions
satisfying (%
\ref{i-eqa1}), where $Y$ and $\hat{Y}$ are two special
semimartingales. Let
\[
Z_{t}=M_{t}+\sum_{i=1}^{d}\int_{\tau
}^{t}f_{i}(s,Y_{s})\,dB_{s}^{i}.
\]
Then
%
\begin{equation}\label{it-0w1}
Y_{t}-\xi=\int_{t}^{T}f_{0}(s,Y_{s},\mathbf{L}(Z,Y)_{s})\,ds+Z_{t}-Z_{T}%
\end{equation}
for $t\in[ \tau,T]$, where
\[
\mathbf{L}( Z,Y) =L\Biggl( Z-\sum_{i=1}^{d}\int_{\tau
}^{\cdot}f_{i}(s,Y_{s})\,dB_{s}^{i}\Biggr) .
\]
It follows that
\[
Y_{t}=\mathbf{E}[ \xi+A_{T}|\mathcal{F}_{t}] -A_{t},
\]
where
\[
A_{t}=\int_{\tau}^{t}f_{0}(s,Y_{s},\mathbf{L}(Z,Y)_{s})\,ds\qquad
\forall t\in[ \tau,T].
\]
Hence $Y_{t}=Y(A)_{t}$ and the integral equation (\ref{it-0w1})
becomes
\[
Y_{t}=A_{T}-Z_{T}+\xi-A_{t}+Z_{t}.
\]
Since $A_{\tau}=0$ so that
\[
Y_{\tau}=A_{T}-Z_{T}+\xi+Z_{\tau},
\]
and thus we may rewrite the previous identity as
\[
Y_{t}=Y_{\tau}+( Z_{t}-Z_{\tau}) -A_{t}.
\]
By the uniqueness of the decompositions for special semimartingales
we must have
\[
Y_{\tau}+( Z_{t}-Z_{\tau}) =\mathbf{E}[ \xi+A_{T}|%
\mathcal{F}_{t}] =M(A)_{t}.
\]
Since $L(M)=L(M-M_{\tau})$ for any $M\in\mathcal{M}^{2}([\tau,T];%
\mathbf{R}^{d^{\prime}})$, so that $\mathbf{L}(Z,Y)=\mathbf{L}%
(M(A),Y)$. Hence
\[
A_{t}=\int_{\tau}^{t}f_{0}(s,Y(A)_{s},\mathbf{L}(M(A),Y(A))_{s})\,ds.
\]
The same argument applies to $(\tilde{Y},\tilde{M})$, so that we
also have
\[
\tilde{A}_{t}=\int_{\tau}^{t}f_{0}(s,Y(\tilde{A})_{s},\mathbf{L}(M(
\tilde{A}),Y(\tilde{A}))_{s})\,ds.
\]
By Theorem~\ref{lemma3}, $A=\tilde{A}$, which yields that
$Y=\tilde{Y}$. It follows then
\[
Z_{t}-Z_{\tau}=\tilde{Z}_{t}-\tilde{Z}_{\tau}\qquad\forall
t\in[ \tau,T]
\]
thus $M-M_{\tau}=\tilde{M}-\tilde{M}_{\tau}$ which
completes the proof.
\end{pf}

One of course wonders whether the global existence can be
established, by means of weighted norms, for example, as in the BSDE
literature. The present authors were unable to achieve better
results than the local existence even with different choices of
norms or spaces to which we apply the fixed point theorem. In fact,
under the Lipschitz condition only on the mapping $L$, the local
existence is the best we can hope. This is because $L(M)_{t}$ may
depend on the whole path from $\tau$ to $T$, and therefore the
corresponding stochastic functional differential equation
\[
dV_{t}=f_{0}(t,Y(V)_{t},L(M(V))_{t})\,ds+%
\sum_{i=1}^{d}f_{i}(t,Y(V)_{t})\,dB_{t}^{i},\qquad V_{\tau}=0,
\]
is neither local nor Markovian. This can be best demonstrated by its
associated differential and integral equation. For example, it is
not
difficult to show that%
\[
L_{c}(M)_{t}=\sqrt{\mathbf{E}( \langle M^{c},M^{c}\rangle
_{T}-\langle M^{c},M^{c}\rangle_{t}|\mathcal{F}_{t}) }
\]
for $t\in[ \tau,T]$ is Lipschitz continuous, where $M^{c}$
is its continuous martingale part such that $M_{0}^{c}=0$, and
therefore we have
\begin{corollary}
\label{coros1}Suppose $T\leq l_{2}$. Then there is a unique special
semimartingale $Y=(Y_{t})_{t\in[ 0,T]}$ such that $Y_{T}=\xi
$ and
%
\begin{equation}\label{bsde-r1}
Y_{t}-\xi=\int_{t}^{T}f_{0}(s,Y_{s},L_{c}(M))\,ds+M_{t}-M_{T}.
\end{equation}
Moreover $M$ is unique up to a random variable measurable with respect
to~$\mathcal{F}_{0}$.
\end{corollary}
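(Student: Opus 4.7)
The plan is to exhibit (\ref{bsde-r1}) as a special case of BSDE (\ref{bsde-a}) to which Theorem \ref{tha1} applies directly, with $\tau = 0$, $f_{i} \equiv 0$ for $i = 1, \ldots, d$, and $L = L_{c}$. Under this identification, the existence and uniqueness of $Y$ is exactly the conclusion of Theorem \ref{tha1}, and the statement that $M$ is unique up to an $\mathcal{F}_{0}$-measurable random variable is the specialisation of the uniqueness clause $M - M_{\tau} = \tilde{M} - \tilde{M}_{\tau}$ to $\tau = 0$. The whole task therefore reduces to checking the two structural hypotheses on $L$ required by Theorem \ref{tha1}: the Lipschitz property, and the shift-invariance $L_{c}(M) = L_{c}(M - M_{0})$.

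I would first dispose of shift-invariance, which is essentially immediate. On $[0, T]$ the continuous martingale part of $M - M_{0}$ is $M^{c} - M^{c}_{0}$, and the predictable bracket of a continuous martingale is unaffected by subtraction of an $\mathcal{F}_{0}$-measurable constant, so
\begin{equation*}
\langle (M - M_{0})^{c}\rangle_{T} - \langle (M - M_{0})^{c}\rangle_{t} = \langle M^{c}\rangle_{T} - \langle M^{c}\rangle_{t}
\end{equation*}
for every $t \in [0, T]$. Taking conditional expectations given $\mathcal{F}_{t}$ and square roots yields $L_{c}(M - M_{0})_{t} = L_{c}(M)_{t}$ directly.

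The Lipschitz bound is the main, though modest, obstacle. I would work with the $\mathcal{H}^{2}$-valued version of hypothesis~3 rather than the $\mathcal{C}$-valued one, since $L_{c}(M)$ need not be continuous in $t$ when $M$ has jumps, but is obviously square-integrable in $(t, \omega)$. The identity $\boldsymbol{E}(\langle N^{c}\rangle_{T} - \langle N^{c}\rangle_{t}|\mathcal{F}_{t}) = \boldsymbol{E}(|N^{c}_{T} - N^{c}_{t}|^{2}|\mathcal{F}_{t})$ rewrites $L_{c}(M)_{t}$ as the conditional $L^{2}$-norm $\sqrt{\boldsymbol{E}(|M^{c}_{T} - M^{c}_{t}|^{2}|\mathcal{F}_{t})}$, to which the conditional Minkowski inequality applies to give the pointwise bound $|L_{c}(M)_{t} - L_{c}(\tilde{M})_{t}| \leq L_{c}(M - \tilde{M})_{t}$. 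Squaring, integrating in $t$ over $[0, T]$ and taking unconditional expectation then yields
\begin{equation*}
||L_{c}(M) - L_{c}(\tilde{M})||_{\mathcal{H}^{2}}^{2} \leq 4T\, \boldsymbol{E}\sup_{t \in [0, T]}|(M - \tilde{M})^{c}_{t}|^{2} \leq C_{1}^{2}\,||M - \tilde{M}||_{\mathcal{C}}^{2},
\end{equation*}
using the elementary BDG-type comparison $||N^{c}||_{\mathcal{C}} \leq c\,||N||_{\mathcal{C}}$ that follows from $\langle N^{c}\rangle \leq \langle N\rangle$. With both hypotheses in hand, Theorem \ref{tha1} specialised to $\tau = 0$, $f_{i} \equiv 0$ for $i \geq 1$, $L = L_{c}$, delivers the unique special semimartingale $Y$ together with the martingale $M$ unique up to an $\mathcal{F}_{0}$-measurable shift, as required.
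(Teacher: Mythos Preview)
Your proposal is correct and follows exactly the route the paper intends: the paper's entire argument for this corollary is the sentence preceding it, which asserts that $L_{c}$ is Lipschitz continuous and then invokes Theorem~\ref{tha1}. You have simply supplied the details the paper omits, namely the verification of the shift-invariance $L_{c}(M)=L_{c}(M-M_{0})$ and the Lipschitz bound for $L_{c}$ in the $\mathcal{H}^{2}$-norm, before specialising Theorem~\ref{tha1} to $\tau=0$ and $f_{i}\equiv 0$ for $i\geq 1$.
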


Let us apply Corollary~\ref{coros1} to the case that $(\mathcal{F}%
_{t})_{t\geq0}$ is the Brownian filtration of Brownian motion $%
B=(B^{1},\ldots,B^{d})$. Then, by It\^{o}'s martingale
representation theorem,
\[
L_{c}(M)_{t}=\sqrt{\int_{t}^{T}\sum_{i=1}^{d}\mathbf{E}(|Z_{s}^{i}|^{2}|%
\mathcal{F}_{t})\,ds},
\]
where $Z^{i}$ are predictable processes such that
\[
M_{T}-M_{\tau}=\sum_{i=1}^{d}\int_{\tau
}^{T}Z_{t}^{i}\,dB_{t}^{i}.
\]
Suppose $u$ is a bounded, smooth function which solves the backward
parabolic nonlinear equation
%
\begin{equation}\label{diff-int}
\frac{\partial}{\partial t}u+\frac{1}{2}\Delta
u+f_{0}(t,u,K(u))=0\qquad\mbox{on }[\tau,T]\times R^{d},
\end{equation}
with $u(T,\cdot)=\varphi$, where
\[
K(u)(t,x)=\sqrt{\int_{t}^{T}P_{s-t}|\nabla u|^{2}(s,x)\,ds},
\]
where $(P_{t})_{t\geq0}$ is the heat semi-group in
$\mathbf{R}^{d}$, that is, $P_{t}=e^{{(t\Delta)}/{2}}$. In
particular, the differential and integral equation (\ref{diff-int})
is not local, and is a nonlinear equation involving space--time
integration and partial derivatives.

Applying It\^{o}'s formula to the process $Y_{t}=u(t,B_{t})$ one has
\begin{eqnarray*}
Y_{T}-Y_{t} &=&\int_{t}^{T}\biggl( \frac{\partial}{\partial t}+\frac{1}{2}%
\Delta\biggr) u(s,B_{s})\,ds+M_{T}-M_{t} \\
&=&-\int_{t}^{T}f_{0}(s,Y_{s},K(u)(s,B_{s}))\,ds+M_{T}-M_{t},
\end{eqnarray*}
where $M_{t}=\int_{0}^{t}\nabla u(s,B_{s})\,dB_{s}$ is a
square-integrable martingale, and one recognizes that
\begin{eqnarray*}
L_{c}(M)_{t} &=&\sqrt{\mathbf{E}( \langle M,M\rangle
_{T}-\langle
M,M\rangle_{t}|\mathcal{F}_{t}) } \\
&=&\sqrt{\mathbf{E}\biggl( \int_{t}^{T}|\nabla u|^{2}(s,B_{s})\,ds|\mathcal{
F}_{t}\biggr) } \\
&=&\sqrt{\int_{t}^{T}P_{s-t}|\nabla u|^{2}(s,B_{t})\,ds} \\
&=&K(u)(t,B_{t}).
\end{eqnarray*}
Therefore $(Y,M)$ is the unique solution to (\ref{bsde-r1}), and we
have a probability representation
\[
u(t,x)=\mathbf{E}\{Y_{t}|B_{t}=x\}.
\]
Since the nonlinear equation (\ref{diff-int}) depends on the
``future'' of the solution from
time $T$, it is not always possible that a solution exists back to
any time $\tau$. In turn, we thus cannot expect that the general
BSDE (\ref{bsde-a}) have a solution that is global in time without further
restrictions on $L$.

\section{Global solutions}\label{sec4}

In the previous section, under only the Lipschitz conditions on $L$
we are able to construct a solution to the backward stochastic
differential equation (\ref{bsde-a}) back to time $\tau$ such that
$T-\tau\leq l_{2}$.

In this section we construct the unique global solution to
(\ref{bsde-a}) if $L$ satisfies further regularity conditions.

We assume that the mapping $L\dvtx\mathcal{M}^{2}([0,T];\mathbf{R}%
^{d^{\prime}})\rightarrow
\mathcal{H}^{2}([0,T];\mathbf{R}^{m})$ (resp.,
$\mathcal{C}([0,T];\mathbf{R}^{m})$) satisfies three technical
conditions (a), (b) and (c) below: the local-in-time property, the
differential property and the Lipschitz condition. The last one is
standard, but the
first two properties are motivated by the example of density processes
in It%
\^{o}'s martingale representations.

For any $[T_{2},T_{1}]\subset[ 0,T]$, define the restriction
\[
L_{[T_{2},T_{1}]}\dvtx\mathcal{M}^{2}([T_{2},T_{1}];\mathbf{R}^{d^{\prime
}})\rightarrow
\mathcal{H}^{2}([T_{2},T_{1}];\mathbf{R}^{m})\qquad\mbox{(resp., }
\mathcal{C}([T_{2},T_{1}];\mathbf{R}^{d^{\prime}})\mbox{)}
\]
by $L_{[T_{2},T_{1}]}(N)_{t}=L(\hat{N})_{t}$ for any $N\in\mathcal{M}%
^{2}([T_{2},T_{1}];\mathbf{R}^{d^{\prime}})$ and $t\in[
T_{2},T_{1}]$, where $\hat{N}\in\mathcal{M}^{2}([0,T];\mathbf{R}%
^{d^{\prime}})$ defined by $\hat{N}_{t}=\mathbf{E}(N_{T_{1}}|\mathcal{F}%
_{t})$ for $t\leq T_{1}$ and $\hat{N}_{t}=N_{T_{1}}$ for $t\geq T_{1}$.%

(a) (\textit{Local-in-time property}.) For every pair of nonnegative
rational
numbers $T_{2}<T_{1}\leq T$, and for any $M\in\mathcal{M}^{2}([0,T];%
\mathbf{R}^{d^{\prime}})$, $L(M)=L_{[T_{2},T_{1}]}(\tilde{M})$ on $
(T_{2},T_{1})$, where $\tilde{M}=(M_{t})_{t\in[
T_{2},T_{1}]}$ is restriction of $M$ on $[T_{2},T_{1}]$. The
local-in-time property requires
that $L(M)_{t}$ is locally defined, that is, $L(M)_{t}$ depends only on
$%
(M_{s})_{s\in[ t,t+\varepsilon)}$ for however small the $%
\varepsilon>0$.

(b) (\textit{Differential property}.) For every pair of nonnegative
rational
numbers $T_{1}<T_{2}\leq T$ and $M\in\mathcal{M}^{2}([T_{2},T_{1}];%
\mathbf{R}^{d^{\prime}})$, one has $%
L_{[T_{2},T_{1}]}(M-M_{T_{2}})=L_{[T_{2},T_{1}]}(M)$ on
$(T_{2},T_{1})$. The differential property requires that
$L_{[T_{2},T_{1}]}(M)_{t}$ depends only on the increments
$\{M_{s}-M_{T_{2}}\dvtx s\geq t\}$ for $t\in[
T_{2},T_{1}]$.

(c) (\textit{Lipschitz continuity}.) $L\dvtx\mathcal{M}^{2}([0,T];\mathbf{R}
^{d^{\prime}})\rightarrow\mathcal{H}^{2}([0,T];\mathbf{R}^{m})$
(resp., $\mathcal{C}([0$, $T];\mathbf{R}^{m})$) is bounded and Lipschitz
continuous: there is a constant $C_{1}$ such that
%
\begin{equation} \label{a1}
\|L(M)\|_{\mathcal{H}_{[T_{2},T_{1}]}^{2}}\leq C_{1}\|M\|_{\mathcal{C}%
[T_{2},T_{1}]}
\end{equation}
and
%
\begin{equation} \label{a2}
\|L(M)-L(\tilde{M})\|_{\mathcal{H}_{[T_{2},T_{1}]}^{2}}\leq
C_{1}\|M-\tilde{M%
}\|_{\mathcal{C}[T_{2},T_{1}]}
\end{equation}
[resp.,
%
\begin{equation} \label{b1}
\|L(M)\|_{\mathcal{C}[T_{2},T_{1}]}\leq
C_{1}\|M\|_{\mathcal{C}[T_{2},T_{1}]}
\end{equation}
and
%
\begin{equation} \label{b2}
\|L(M)-L(\tilde{M})\|_{\mathcal{C}[T_{2},T_{1}]}\leq C_{1}\|M-\tilde
{M}\|_{%
\mathcal{C}[T_{2},T_{1}]}\mbox{]}
\end{equation}
for any $M,\tilde{M}\in
\mathcal{M}^{2}([0,T];\mathbf{R}^{d^{\prime}})$ and for any
rationales $T_{1}$ and $T_{2}$ such that $0\leq T_{2}<T_{1}\leq T $.
That is to say $L_{[T_{2},T_{1}]}$ are Lipschitz continuous with
Lipschitz constant independent of $[T_{2},T_{1}]\subset[
0,T]$.

The first example below provides the most interesting examples of
$L$ in applications, which are, however, variations of the classical
example considered in the literature.
\begin{Example}\label{examp1}
Suppose that $(\mathcal{F}_{t})_{t\geq0}$ is the
Brownian filtration generated by a $d+N$-dimensional Brownian motion $%
B=(B^{1},\ldots,B^{d},W^{1},\ldots,W^{N})$ on a probability space
$(\Omega
,\mathcal{F},\mathbf{P})$. If $M\in\mathcal
{M}^{2}([0,T];\mathbf{R}%
^{d^{\prime}})$, then, according to It\^{o}'s martingale
representation
theorem, $M$ is continuous, and there are unique predictable processes $
(Z_{t}^{j,i})_{t\in[ 0,T]}$ such that
%
\begin{equation} \label{m-r1}\qquad
M_{t}^{j}-M_{0}^{j}=\sum_{i=1}^{d}\int_{0}^{t}Z_{s}^{j,i}\,dB_{s}^{i}+%
\sum_{k=1}^{N}\int_{0}^{t}Z_{s}^{j,k+d}\,dW_{s}^{k},\qquad
j=1,\ldots,d^{\prime},
\end{equation}
for all $t\in[ 0,T]$. Assign $M\in\mathcal{M}^{2}([0,T];\mathbf{R%
}^{d^{\prime}})$ with $L(M)=(Z^{j,i})_{j\leq d^{\prime},i\leq d}$.
For $%
0\leq T_{2}<T_{1}\leq T$, the restriction\vspace*{1pt} of $M$ on $[T_{2},T_{1}]$,
denoted again by $M$, belongs to $\mathcal
{M}^{2}([T_{2},T_{1}];\mathbf{R}%
^{d^{\prime}})$. By the uniqueness of It\^{o}'s representation we
can see that $L$ satisfies the local-in-time and differential
properties. It is also easy to show that
$L\dvtx\mathcal{M}^{2}([0,T];\mathbf{R}^{d^{\prime}})\rightarrow
\mathcal{H}^{2}([0,T];\mathbf{R}^{d^{\prime}\times d})$
satisfies the Lipschitz condition.
\end{Example}

Another class of interesting examples of $L$ is presented in the
following example.
\begin{Example}\label{examp2}
Let $(\Omega,\mathcal{F},\mathcal
{F}_{t},\mathbf{P}%
) $ be a filtered probability space which satisfies the technical
conditions described at the beginning of Section~\ref{sec2}, but not
necessary to be a Brownian
filtration. Let $B=(B_{t})_{t\geq0}$ be a Brownian motion in\vadjust{\goodbreak}
$\mathbf{R}%
^{m}$ adapted to $(\mathcal{F}_{t})_{t\geq0}$, therefore $(\mathcal{F}%
_{t})_{t\geq0}$ is in general bigger than the Brownian filtration
generated
by $B$. Let $\mathcal{M}_{B}$ denote the closed stable sub-space of $%
\mathcal{M}_{2}$ determined by $B$, that is,
\[
\mathcal{M}_{B}=\Biggl\{ \sum_{j=1}^{m}\int_{0}^{\cdot
}Z_{s}^{j}\,dB_{s}^{j}\dvtx Z^{j}\mbox{ are predictable and }\mathbf{E}%
\int_{0}^{T}|H_{s}^{j}|^{2}\,ds<\infty\Biggr\}.
\]
Then any martingale $M$ has a unique decomposition%
\[
M_{t}-M_{0}=\sum_{j=1}^{m}\int_{0}^{t}Z_{s}^{j}\,dB_{s}^{j}+M_{t}^{\prime
},
\]
where $M^{\prime}\in\mathcal{M}^{2}([0,T];\mathbf{R})$ orthogonal
to $%
\mathcal{M}_{B}$. Then $L(M)=(Z^{j})$ satisfies the local-in-time
and differential properties, as well as the Lipschitz condition.
\end{Example}

In the following theorems we retain the basic assumptions on the
coefficients $f_{j}^{i}$ and the terminal values $\xi^{i}$.
\begin{theorem}
\label{theorem} Assume that $L$ satisfies conditions \textup{(a), (b)} and
\textup{(c)}
listed above. Then there exists a pair of processes $(Y,M)$, where $%
Y=(Y_{t})_{t\in[ 0,T]}$ is a special semimartingale, and $%
M=(M_{t})_{t\in[ 0,T]}$ is a square integrable martingale,
which solves the backward equation
%
\begin{equation} \label{bsde1m}\quad
dY_{t}=-f_{0}(t,Y_{t},L(M)_{t})\,dt-%
\sum_{i=1}^{d}f_{i}(t,Y_{t})\,dB_{t}^{i}+dM_{t},\qquad Y_{T}=\xi.
\end{equation}
The solution $Y$ is unique, and its martingale correction term $M$ is
unique up to a random variable measurable with respect to
$\mathcal{F}_{0}$.
\end{theorem}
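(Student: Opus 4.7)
The plan is to build a global solution by iterating Theorem \ref{tha1} on a sufficiently fine partition of $[0,T]$. Choose $0 = T_N < T_{N-1} < \cdots < T_0 = T$ with mesh at most $l_2$. Working backward, first apply Theorem \ref{tha1} on $[T_1,T]$ with terminal value $\xi$ to obtain a pair $(Y^{(1)},M^{(1)})$ solving the BSDE on that interval using $L_{[T_1,T]}$; then set $\xi^{(2)} := Y^{(1)}_{T_1}\in L^2(\Omega,\mathcal{F}_{T_1},\boldsymbol{P})$ and apply Theorem \ref{tha1} again on $[T_2,T_1]$ with terminal $\xi^{(2)}$ and restriction $L_{[T_2,T_1]}$. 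Iterating, one obtains on each $[T_k,T_{k-1}]$ a pair $(Y^{(k)},M^{(k)})$ with variation part $V^{(k)}$ satisfying $V^{(k)}_{T_k}=0$. Glue these into a continuous special semimartingale $Y$ by setting $Y_t = Y^{(k)}_t$ for $t\in[T_k,T_{k-1}]$; continuity at each junction $T_{k-1}$ holds since $Y^{(k)}_{T_{k-1}} = \xi^{(k)} = Y^{(k-1)}_{T_{k-1}}$. Form the global finite-variation process $V$ from the $V^{(k)}$'s and define the global martingale $M_t := \boldsymbol{E}(\xi + V_T\mid \mathcal{F}_t)$, so that $Y_t = M_t - V_t$ on $[0,T]$.

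Three checks are needed, each keyed to a specific hypothesis on $L$. First, each sub-problem must fit the hypotheses of Theorem \ref{tha1}: the uniform Lipschitz continuity of the restrictions $L_{[T_k,T_{k-1}]}$ is provided by condition (c), and the invariance $L_{[T_k,T_{k-1}]}(N) = L_{[T_k,T_{k-1}]}(N - N_{T_k})$ required by Theorem \ref{tha1} is exactly the differential property (b). Second, one must verify that the globally pasted pair $(Y,M)$ satisfies equation (\ref{bsde1m}) on every subinterval $[T_k,T_{k-1}]$. This reduces to showing that $L(M)_t$ computed from the globally defined $M$ agrees, for $t\in(T_k,T_{k-1})$, with $L_{[T_k,T_{k-1}]}$ applied to the appropriate restriction that coincides with $M^{(k)}$ up to an $\mathcal{F}_{T_k}$-measurable shift. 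Precisely this compatibility is delivered by the local-in-time property (a) together with (b). Third, uniqueness is obtained by a backward induction on $k$: Theorem \ref{tha1} gives uniqueness of $Y$ and of $M$ up to $M_{T_1}$ on $[T_1,T]$, which pins down $Y_{T_1}$, and a further application of Theorem \ref{tha1} on $[T_2,T_1]$ then determines $Y$ and $M-M_{T_2}$ there, and so on down to $0$.

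The main obstacle is the gluing step, namely verifying that the single global martingale $M$ induces on each subinterval the same $L$-functional as the locally constructed $M^{(k)}$. A priori $L(M)_t$ could depend on the entire future of $M$ on $[t,T]$, so without further structure on $L$ there is no reason a piecewise construction should recover a solution of the global equation. Properties (a) and (b) are tailored exactly to overcome this: (a) reduces $L(M)\big|_{(T_k,T_{k-1})}$ to a functional of $M$ on that subinterval alone, and (b) makes it insensitive to an additive $\mathcal{F}_{T_k}$-measurable shift. Once this compatibility is in hand, existence and uniqueness follow by finite iteration of Theorem \ref{tha1}.
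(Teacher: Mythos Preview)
Your proposal is correct and follows essentially the same route as the paper: partition $[0,T]$ into pieces of length at most $l_{2}$, solve locally via Theorem~\ref{tha1} (the paper invokes the equivalent fixed-point Theorem~\ref{lemma3}), glue the variation parts, and use properties (a) and (b) to identify $L(M)$ on each subinterval with the locally computed $L_{[T_{k},T_{k-1}]}(M^{(k)})$. The one place where the paper is more explicit is your sentence ``define the global martingale $M_{t}:=\boldsymbol{E}(\xi+V_{T}\mid\mathcal{F}_{t})$, so that $Y_{t}=M_{t}-V_{t}$'': this identity is not automatic, and the paper devotes a separate lemma (Lemma~\ref{le4}) to checking that the glued process $Y+V$ is indeed an $(\mathcal{F}_{t})$-martingale equal to $\boldsymbol{E}(\xi+V_{T}\mid\mathcal{F}_{t})$, by verifying that on each $[T_{j},T_{j-1}]$ it differs from $M(j)$ by an $\mathcal{F}_{T_{j}}$-measurable shift.
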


The remainder of this section is devoted to the proof of Theorem \ref%
{theorem}.
\begin{pf*}{Proof of Theorem~\ref{theorem}}
Recall that
\[
l_{2}=\frac{1}{C_{2}^{2}[ 4C_{1}+6( 1+2\sqrt{d}) +3\sqrt{2}%
dC_{1}C_{2}] ^{2}}\wedge1,
\]
which is positive and independent of $\xi$.

By Theorem~\ref{lemma3}, if the terminal time $T\leq l_{2}$, the
nonlinear mapping $\mathbb{L}$ on
$\mathcal{C}_{0}([0,T];\mathbf{R}^{d^{\prime}})$
admits a unique fixed point, where%
\[
\mathbb{L}(V)_{t}=\int_{0}^{t}f_{0}(s,Y(V)_{s},L(M(V))_{s})\,ds+\sum
_{i=1}^{d}%
\int_{0}^{t}f_{i}(s,Y(V)_{s})\,dB_{s}^{i}.
\]

Next we consider the case $T>l_{2}$. In this case we divide the
interval $%
[0,T]$ into subintervals with length not exceeding $l_{2}$. More
precisely, let
\[
T=T_{0}>T_{1}>\cdots>T_{k}=0
\]
so that $0<T_{i-1}-T_{i}\leq l_{2}$ where $T_{i}$ are rationales except\vadjust{\goodbreak}
$%
T_{0}=T$.

Begin with the top interval $[T_{1},T_{0}]$, together with the
terminal
value $Y_{T_{0}}=\xi$ and the filtration starting from $\mathcal
{F}_{T_{1}}$. Applying Lemma~\ref{lemma3} to the interval $[T_{1},T_{0}]$ and
$\mathbb{L}%
_{1}$, where
\begin{eqnarray*}
(\mathbb{L}_{1}V)_{t}
&=&\int
_{T_{1}}^{t}f_{0}\bigl(s,Y_{1}(V)_{s},L_{[T_{1},T_{0}]}(M_{1}(V))_{s}\bigr)\,ds \\
&&{}+\sum_{i=1}^{d}\int_{T_{1}}^{t}f_{i}(s,Y_{1}(V)_{s})\,dB_{s}^{i},
\end{eqnarray*}
where
\[
M_{1}(V)_{t}=\mathbf{E}( \xi
+V_{T_{0}}|\mathcal{F}_{t}) ,\qquad
Y_{1}(V)_{t}=M_{1}(V)_{t}-V_{t}
\]
for any $V\in\mathcal{C}([T_{1},T_{0}];\mathbf{R}^{d^{\prime}})$
and $%
t\in[ T_{1},T_{0}]$. Then, there exists a unique $V(1)\in\mathcal{C}%
_{0}([T_{1},T_{0}];\mathbf{R}^{d^{\prime}})$ such that $\mathbb{L}%
_{1}V(1)=V(1)$.

Repeat the same argument to each interval $[T_{j},T_{j-1}]$ (for
$2\leq j\leq k$) with the terminal value
$Y_{j-1}(V(j-1))_{T_{j-1}}$, the
filtration starting from $\mathcal{F}_{T_{j}}$, and the nonlinear
mapping $%
\mathbb{L}_{j}$ defined on $\mathcal{C}_{0}([T_{j},T_{j-1}];\mathbf{R}%
^{d^{\prime}})$ by
\begin{eqnarray*}
(\mathbb{L}_{j}V)_{t}
&=&\int_{T_{j}}^{t}f_{0}\bigl(s,Y_{j}(V)_{s},L_{[T_{j},T_{j-1}]}(M(V_{j}))_{s}\bigr)\,ds
\\
&&{}+\sum_{i=1}^{N}\int_{T_{j}}^{t}f_{i}(s,Y_{j}(V)_{s})\,dB_{s}^{i},
\end{eqnarray*}
where $V\in\mathcal{C}([T_{j},T_{j-1}];\mathbf{R}^{d^{\prime
}})$ and
\begin{eqnarray*}
M_{j}(V)_{t}& = &\mathbf{E}\bigl(Y_{j-1}\bigl(V(j-1)\bigr)_{T_{j-1}}+V_{T_{j-1}}|\mathcal{%
F}_{t}\bigr), \\
Y_{j}(V)_{t}& = &M_{j}(V)_{t}-V_{t}
\end{eqnarray*}
for $t\in[ T_{j},T_{j-1}]$.

Therefore, for $1\leq j\leq k$, there exists a unique $V(j)\in
C([T_{j},T_{j-1}];\mathbf{R}^{d^{\prime}})$ such that
\begin{eqnarray*}
V(j)_{t}
&=&\int_{T_{j}}^{t}f_{0}\bigl(s,Y(j)_{s},L_{[T_{j},T_{j-1}]}(M(j))_{s}\bigr)\,ds
\\
&&{}+\sum_{i=1}^{N}\int_{T_{j}}^{t}f_{i}(s,Y(j)_{s})\,dB_{s}^{i}
\end{eqnarray*}
for $t\in[ T_{j},T_{j-1}]$, where $Y(0)_{T_{0}}=\xi$, $%
Y(j-1)_{T_{j-1}}=Y(j)_{T_{j-1}}$ for $2\leq j\leq k$, and
\begin{eqnarray*}
M(j)_{t}& = &\mathbf{E}\bigl(Y(j-1)_{T_{j-1}}+V(j)_{T_{j-1}}|\mathcal
{F}_{t}\bigr)%
, \\
Y(j)_{t}& = &M(j)_{t}-V(j)_{t}
\end{eqnarray*}
for $t\in[ T_{j},T_{j-1}]$.\vadjust{\goodbreak}

Since $Y(j-1)_{T_{j-1}}=Y(j)_{T_{j-1}}$ for $2\leq j\leq k$, $%
Y=(Y_{t})_{t\in[ 0,T]}$ given by
\[
Y_{t}=Y(j)_{t}\qquad\mbox{if }t\in[ T_{j},T_{j-1}]
\]
for $1\leq j\leq k$, is well defined. Define $V$ by shifting it at
the partition points,
\[
V_{t}=\cases{
V(k)_{t}, &\quad if $t\in[ 0,T_{k-1}]$, \cr
V(k-1)_{t}+V(k)_{T_{k-1}}, &\quad if $t\in[ T_{k-1},T_{k-2}]$, \cr
\cdots& \cr
\displaystyle V(1)_{t}+\sum_{l=2}^{k}V(l)_{T_{l-1}}, &\quad if $t\in[ T_{1},T]$.}
\]
Then $V\in\mathcal{C}([0,T];\mathbf{R}^{d^{\prime}})$. Finally
we define
\[
M_{t}=Y_{t}+V_{t}\qquad\mbox{for }t\in[ 0,T].
\]
It remains to show that $M$ is a martingale.
\begin{lemma}
\label{le4} $M$ defined above has the expression
%
\begin{equation} \label{ex1}
M_{t}=M(j)_{t}+\sum_{l=j+1}^{k}V(l)_{T_{l-1}} \qquad\mbox{if } t\in
[ T_{j},T_{j-1}]
\end{equation}
for $1\leq j\leq k$, and moreover, $M$ is an
$(\mathcal{F}_{t})$-martingale up to time $T$, so that
\[
M_{t}=\mathbf{E}(\xi+V_{T}|\mathcal{F}_{t}).
\]
\end{lemma}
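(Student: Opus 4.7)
The plan is to first establish the explicit representation (\ref{ex1}) by routine substitution, and then exploit it together with the tower property to identify $M$ with the $L^2$-martingale $\mathbf{E}(\xi + V_T \mid \mathcal{F}_t)$.

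For the first step, recall the piecewise definitions: for $t \in [T_j, T_{j-1}]$ we have $Y_t = Y(j)_t$ and, reading off the shifted formula for $V$,
\begin{equation*}
V_t = V(j)_t + \sum_{l=j+1}^{k} V(l)_{T_{l-1}}.
\end{equation*}
Adding these and using $M(j)_t = Y(j)_t + V(j)_t$ on $[T_j, T_{j-1}]$ immediately yields (\ref{ex1}). At this point I would also record two facts that will matter: (i) $V(j)_{T_j}=0$ for every $j$ (since $V(j) \in \mathcal{C}_0$), so $Y(j)_{T_j} = M(j)_{T_j}$; and (ii) each $V(l)_{T_{l-1}}$ with $l \geq j+1$ is $\mathcal{F}_{T_j}$-measurable because $T_{l-1} \leq T_j$, so the sum in (\ref{ex1}) is $\mathcal{F}_t$-measurable throughout $[T_j, T_{j-1}]$.

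The second step is to iterate the definition of $M(j)$ down to the top interval. By construction,
\begin{equation*}
M(j)_t = \mathbf{E}\bigl(Y(j-1)_{T_{j-1}} + V(j)_{T_{j-1}} \,\big|\, \mathcal{F}_t\bigr), \qquad t \in [T_j, T_{j-1}],
\end{equation*}
and by the endpoint identity $Y(j-1)_{T_{j-1}} = M(j-1)_{T_{j-1}}$, so a single application of the tower property (using $\mathcal{F}_t \subset \mathcal{F}_{T_{j-1}}$) replaces $Y(j-1)_{T_{j-1}}$ by $\mathbf{E}(Y(j-2)_{T_{j-2}} + V(j-1)_{T_{j-2}} \mid \mathcal{F}_t)$. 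Repeating this down to level $1$, where $Y(0)_{T_0} = \xi$, gives
\begin{equation*}
M(j)_t = \mathbf{E}\!\left( \xi + \sum_{l=1}^{j} V(l)_{T_{l-1}} \,\bigg|\, \mathcal{F}_t \right).
\end{equation*}
Combining with (\ref{ex1}) and pulling the $\mathcal{F}_t$-measurable tail sum $\sum_{l=j+1}^{k} V(l)_{T_{l-1}}$ inside the conditional expectation, I obtain
\begin{equation*}
M_t = \mathbf{E}\!\left( \xi + \sum_{l=1}^{k} V(l)_{T_{l-1}} \,\bigg|\, \mathcal{F}_t \right).
\end{equation*}

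The third step is to recognize the total sum as $V_T$. Since $T = T_0 \in [T_1, T_0]$, the piecewise formula for $V$ gives $V_T = V(1)_{T_0} + \sum_{l=2}^{k} V(l)_{T_{l-1}} = \sum_{l=1}^{k} V(l)_{T_{l-1}}$, and the identity $M_t = \mathbf{E}(\xi + V_T \mid \mathcal{F}_t)$ follows on every $[T_j, T_{j-1}]$, hence on $[0,T]$. Integrability of $\xi + V_T$ (in fact $L^2$) is inherited from $\xi \in L^2$ and $V(l) \in \mathcal{C}$, so $M$ is a genuine square-integrable $\mathcal{F}_t$-martingale.

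The only real obstacle is bookkeeping: one has to keep the boundary values $V(j)_{T_j} = 0$ and the interface identity $Y(j-1)_{T_{j-1}} = M(j-1)_{T_{j-1}}$ straight while iterating, and to confirm that the tail sum $\sum_{l=j+1}^{k} V(l)_{T_{l-1}}$ really is $\mathcal{F}_{T_j}$-measurable (which is exactly the reason the shifting in the definition of $V$ was arranged that way). Once these indices are verified, the martingale property drops out without any need to check continuity at the partition points as a separate step, since the single formula $M_t = \mathbf{E}(\xi + V_T \mid \mathcal{F}_t)$ is valid uniformly in $t$.
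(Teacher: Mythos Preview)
Your proof is correct. The derivation of (\ref{ex1}) is identical to the paper's, but from there you take a somewhat different route: you iterate the definition of $M(j)$ upward through the tower property to obtain the closed formula $M(j)_t=\boldsymbol{E}\bigl(\xi+\sum_{l=1}^{j}V(l)_{T_{l-1}}\mid\mathcal{F}_t\bigr)$, pull the $\mathcal{F}_t$-measurable tail sum inside, and identify the result with $\boldsymbol{E}(\xi+V_T\mid\mathcal{F}_t)$; the martingale property then comes for free. The paper instead verifies the martingale identity $\boldsymbol{E}(M_t\mid\mathcal{F}_s)=M_s$ directly, splitting into the cases where $s,t$ lie in the same subinterval or in different ones, and only afterward reads off the formula $M_t=\boldsymbol{E}(\xi+V_T\mid\mathcal{F}_t)$ from $M_T=\xi+V_T$. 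The underlying ingredients are the same (the boundary fact $V(j)_{T_j}=0$ giving $Y(j-1)_{T_{j-1}}=M(j-1)_{T_{j-1}}$, plus repeated conditioning), but your organization is a bit more economical: a single one-parameter telescoping replaces the paper's two-parameter case analysis, and you avoid having to check compatibility at the partition points separately.
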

\begin{pf}
We first prove the expression (\ref{ex1}). Since for $1\leq j \leq
k$,
\[
Y(j)_{t}=M(j)_{t}-V(j)_{t}\qquad\mbox{if }t\in[
T_{j},T_{j-1}]
\]
so that
\[
Y_{t}=M(j)_{t}+\sum_{l=j+1}^{k}V(l)_{T_{l-1}}-V_{t}\qquad\mbox{if
}t\in[ T_{j},T_{j-1}],
\]
one may conclude that
\[
M_{t}=M(j)_{t}+\sum_{l=j+1}^{k}V(l)_{T_{l-1}}\qquad\mbox{if }t\in
[ T_{j},T_{j-1}].
\]

It is clear that $M$ is adapted to $(\mathcal{F}_{t})$, so we only
need to show $\mathbf{E}(M_{t}|\mathcal{F}_{s})=M_{s}$ for any
$0\leq s\leq t\leq T$. If $s,t\in[ T_{j},T_{j-1}]$ for some
$j$, then
\[
M_{t}-M_{s}=M(j)_{t}-M(j)_{s}
\]
so that
\[
\mathbf{E}(M_{t}-M_{s}|\mathcal{F}_{s})=\mathbf{E}\bigl(M(j)_{t}-M(j)_{s}|%
\mathcal{F}_{s}\bigr)=0 .
\]
If $s\in[ T_{i},T_{i-1}]$ and $t\in[ T_{j},T_{j-1}]$ for some $%
i>j$, then according to (\ref{ex1}),
\[
M_{s}=M(i)_{s}+\sum_{l=i+1}^{k}V(l)_{T_{l-1}}\vadjust{\goodbreak}
\]
and
\[
M_{t}=M(j)_{t}+\sum_{l=j+1}^{k}V(l)_{T_{l-1}}.
\]
Since $M(j)$ is a martingale on $[T_{j},T_{j-1}]$ so that
\[
\mathbf{E}(M_{t}|\mathcal{F}_{T_{j}})=M(j)_{T_{j}}+%
\sum_{l=j+1}^{k}V(l)_{T_{l-1}},
\]
conditional on $\mathcal{F}_{T_{j+1}}\subset\mathcal{F}_{T_{j}}$
we obtain
%
\begin{equation} \label{jq1}\quad
\mathbf{E}(M_{t}|\mathcal{F}_{T_{j+1}})=\mathbf{E}%
\bigl(M(j)_{T_{j}}+V(j+1)_{T_{j}}|\mathcal{F}_{T_{j+1}}\bigr)+%
\sum_{l=j+2}^{k}V(l)_{T_{l-1}}.
\end{equation}
On the other hand, $M(j)_{T_{j}}=Y_{T_{j}}+V(j)_{T_{j}}=Y_{T_{j}}$
so that
\begin{eqnarray*}
\mathbf{E}\bigl(M(j)_{T_{j}}+V(j+1)_{T_{j}}|\mathcal{F}_{T_{j+1}}\bigr) &=&%
\mathbf{E}\bigl(Y_{T_{j}}+V(j+1)_{T_{j}}|\mathcal{F}_{T_{j+1}}\bigr) \\
&=&M(j+1)_{T_{j+1}}.
\end{eqnarray*}
Substituting it into (\ref{jq1}) we obtain
%
\begin{equation} \label{jq2}
\mathbf{E}(M_{t}|\mathcal{F}_{T_{j+1}})=M(j+1)_{T_{j+1}}+%
\sum_{l=j+2}^{k}V(l)_{T_{l-1}}.
\end{equation}
By repeating the same argument we may establish
%
\begin{equation} \label{jq3}
\mathbf{E}(M_{t}|\mathcal{F}_{T_{i-1}})=M(i-1)_{T_{i-1}}+%
\sum_{l=i}^{k}V(l)_{T_{l-1}}.
\end{equation}
Since $s\in[ T_{i},T_{i-1}]$, conditional on
$\mathcal{F}_{s}$,
\begin{eqnarray*}
\mathbf{E}(M_{t}|\mathcal{F}_{s})& = & \mathbf{E}%
\bigl(M(i-1)_{T_{i-1}}+V(i)_{T_{i-1}}|\mathcal{F}_{s}\bigr)+%
\sum_{l=i+1}^{k}V(l)_{T_{l-1}} \\
& = &\mathbf{E}\bigl(Y_{T_{i-1}}+V(i)_{T_{i-1}}|\mathcal{F}%
_{s}\bigr)+\sum_{l=i+1}^{k}V(l)_{T_{l-1}} \\
& = &M(i)_{s}+\sum_{l=i+1}^{k}V(l)_{T_{l-1}} \\
& = &M_{s},
\end{eqnarray*}
which proves $M$ is an $\mathcal{F}_{t}$-adapted martingale up to
$T$.
\end{pf}

Since $L$ satisfies the local-in-time property and the differential
property, so that
\[
L_{[T_{j},T_{j-1}]}(M(V_{j}))_{s}=L(M)_{s}\qquad\mbox{for }s\in[
T_{j},T_{j-1}],
\]
hence
\[
V(j)_{t}=\int_{T_{j}}^{t}f_{0}(s,Y_{s},L(M)_{s})\,ds+\sum_{i=1}^{d}%
\int_{T_{j}}^{t}f_{i}(s,Y_{s})\,dB_{s}^{i}
\]
for any $t\in[ T_{j},T_{j-1}]$ and $j=2,\ldots,k$. Therefore
\[
V_{t}=\int_{0}^{t}f_{0}(s,Y_{s},L(M)_{s})\,ds+\sum_{i=1}^{d}%
\int_{0}^{t}f_{i}(s,Y_{s})\,dB_{s}^{i}\qquad\forall t\in
[ 0,T]
\]
and $Y=M-V$, $Y_{T}=\xi$, which together imply that
\[
M_{t}-Y_{t}=\int_{0}^{t}f_{0}(s,Y_{s},L(M)_{s})\,ds+\sum_{i=1}^{d}%
\int_{0}^{t}f_{i}(s,Y_{s})\,dB_{s}^{i}\qquad\forall t\in[ 0,T]%
.
\]
Thus $(Y,M)$ solves the backward equation (\ref{bsde-a}). Uniqueness
follows from the fact the solution $(Y(j),M(j)-M(j)_{T_{j}})$ is
unique for any $j$.

The proof of Theorem~\ref{theorem} is complete.
\end{pf*}

We end this article with several comments about the main results.

The local and global existence results remain valid even if the driver $
f_{0}^{j}$ and the diffusion coefficients $f_{i}^{j}$ of the BSDE
are random
as long as the global Lipschitz conditions are maintained. For example,
if $%
f_{0}^{j}\dvtx\mathbf{R}_{+}\times\Omega\times
\mathbf{R}^{d^{\prime
}}\times\mathbf{R}^{m}\rightarrow\mathbf{R}^{d^{\prime}}$
and $%
f_{i}^{j}\dvtx\mathbf{R}_{+}\times\Omega\times
\mathbf{R}^{d^{\prime}}\rightarrow\mathbf{R}^{d^{\prime
}}$ are jointly measurable such that
for any special semimartingale $Y$ and $Z\in\mathcal{H}^{2}([0,T];%
\mathbf{R}^{m})$ (resp., $\mathcal{C}([0,T];\mathbf{R}^{m})$), $%
f_{0}^{j}(t,\cdot,Y_{t},Z_{t})$ and $f_{i}^{j}(t,\cdot,Y_{t})$ are
progressively measurable and
\begin{eqnarray*}
&&\sqrt{\mathbf{E}\int_{T_{2}}^{T_{1}}|f_{0}^{j}(t,\cdot
,Y_{t},Z_{t})-f_{0}^{j}(t,\cdot,\tilde{Y}_{t},\tilde{Z}_{t})|^{2}\,dt} \\
&&\qquad\leq C_{3}\|Y-Y\|_{\mathcal{C}[T_{2},T_{1}]}+C_{3}\|Z-Z\|_{\mathcal{H}
^{2}[T_{2},T_{1}]}
\end{eqnarray*}
and%
\[
\sqrt{\mathbf{E}\int_{T_{2}}^{T_{1}}|f_{i}^{j}(t,\cdot
,Y_{t})-f_{i}^{j}(t,\cdot,\tilde{Y}_{t})|^{2}\,dt}\leq
C_{3}\|Y-Y\|_{\mathcal{%
C}[T_{2},T_{1}]}
\]
for any $[T_{2},T_{1}]\subset[ 0,T]$, $Y,\tilde{Y}\in\mathcal{S}%
([0,T];\mathbf{R}^{d})$ and $Z$, $\tilde{Z}\in\mathcal
{H}^{2}([0,T];%
\mathbf{R}^{m})$ (and similarly for the case $Z$, $\tilde{Z}\in
\mathcal{C%
}([0,T];\mathbf{R}^{m})$ with norm $\mathcal{C}[T_{2},T_{1}]$
instead of $\mathcal{H}^{2}[T_{2},T_{1}]$), then all our local and
global results remain true. We leave the details of the proofs for
the reader who may be interested in such a generalization.

\section*{Acknowledgments}
The authors wish to thank Professor Yves LeJan and the referee for
their comments and suggestions on the presentation.


%
\printaddresses

\end{document}